\documentclass[12pt]{amsart}
\usepackage[T1]{fontenc}
\usepackage{lmodern}
\usepackage{amsmath,amssymb,amsthm,mathtools}
\usepackage[colorlinks,linkcolor=blue,citecolor=blue,urlcolor=blue]{hyperref}
\newtheorem{theorem}{Theorem}[section]
\newtheorem{proposition}[theorem]{Proposition}
\newtheorem{lemma}[theorem]{Lemma}
\numberwithin{equation}{section}
\allowdisplaybreaks
\title{On the Gauss sphere problem}
\author{Chiara Bellotti}
\address{Max Planck Institute for Mathematics, Bonn, Germany}
\email{bellotti@mpim-bonn.mpg.de}
\author{Bryce Kerr}
\address{School of Science, University of New South Wales, Canberra, ACT 2106,
Australia}
\email{ bryce.kerr@unsw.edu.au}

\keywords{Gauss sphere problem, lattice points, character sums}
\subjclass[2020]{Primary 11P21; Secondary 11L07, 11L40, 11M20.}
\begin{document}
\begin{abstract}
In this article we improve Heath-Brown's estimate for the error term in the three-dimensional sphere problem, proving
\[
\#\{\mathbf x\in\mathbb Z^3:\|\mathbf x\|\le R\}=\frac{4\pi}{3}R^3+O_\varepsilon(R^{55/42+\varepsilon}).
\]
Incoporating the recent OpenAI quasi-GRH into our argument further improves this to
\[
\#\{\mathbf x\in\mathbb Z^3:\|\mathbf x\|\le R\}=
\frac{4\pi}{3}R^3+O_\varepsilon(R^{\frac{2274811}{1743872}+\varepsilon}).
\]
\end{abstract}
\maketitle

\section{Introduction}
Let
\[
 S(R)=\#\{\mathbf x\in\mathbb Z^3:\|\mathbf x\|\le R\},
\]
where $\|\mathbf x\|$ is the Euclidean norm. An observation dating back to Gauss is that the number of lattice points in a convex body is asymptotic to its volume, thus we have 
\begin{align}
\label{eq:SR}
S(R)= \frac{4\pi}{3}R^3+o(R^3).
\end{align}
It's expected that 
$$S(R)=\frac{4\pi}{3}R^3+O(R^{1+o(1)})$$
see for example~\cite{ChamizoPastor}while it's known that GRH implies 
$$S(R)=\frac{4\pi}{3}R^3+O(R^{5/4+o(1)})$$
see for example~\cite{HeathBrown}. The question of how far the error in~\eqref{eq:SR} can be reduced has a long
history.  Chen~\cite{Chen} and Vinogradov~\cite{Vinogradov} independently
obtained an error of $O(R^{4/3})$. Chamizo and Iwaniec~\cite{Cham} later
obtained the exponent $29/22$, and Heath-Brown~\cite{HeathBrown} improved
this to $21/16$. We prove the following estimate, which improves the latter
exponent.

\begin{theorem}\label{thm:main}
For every $\varepsilon>0$,
\[
 S(R)=\frac{4\pi}{3}R^3+O_\varepsilon(R^{55/42+\varepsilon}).
\]
\end{theorem}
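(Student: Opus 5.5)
We follow Heath-Brown's framework for the $21/16$ exponent and sharpen its weakest input, the large-sieve-type bound for character sums. By Gauss's theorem, $r_3(n)$ is essentially $\sqrt n\,L(1,\chi_{-4n})$ times a simple factor depending on $2$-adic conditions and square divisors. Hence
\[
S(R)=\sum_{n\le R^2} r_3(n)
\]
reduces, up to acceptable errors, to averages of $\sqrt n\,L(1,\chi_{-n'})$ over $n\le R^2$, with $n'$ the squarefree-type kernel. We expand $L(1,\chi)$ as a smoothed Dirichlet series truncated at length $M$. Terms with $m$ small relative to the tail give the main term $\tfrac{4\pi}{3}R^3$ by Poisson summation in $n$. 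What remains is an error of the shape
\[
\sum_{m\sim M}\frac{1}{m}\Bigl|\sum_{n\le R^2}\sqrt n\,\Bigl(\frac{-n}{m}\Bigr)\Bigr|
\]
together with the tail of the $L$-series.

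The off-diagonal sums are then handled by Poisson summation in $n$ modulo $m$. This converts them to sums of quadratic Gauss sums, $\sum_h \bigl(\frac{h}{m}\bigr)\sqrt m\,\widehat{w}(hR^2/m)$, which are dual sums of length about $M/R^2$ times $R^3/\sqrt m$. Following Heath-Brown, we then apply a large sieve for real characters (Heath-Brown's quadratic large sieve) to the bilinear form in $(h,m)$. The improvement over $21/16$ should come from a sharper treatment of the middle range $M\approx R^{2}$ to $R^{2+\delta}$. There I would apply Cauchy--Schwarz and then a Burgess-type or fourth-moment bound for real character sums to the short sums in $h$. In addition, I would exploit extra averaging over $m$ via a mean value estimate for $\sum_m |L(\tfrac12+it,\chi_m)|^4$, or equivalently Heath-Brown's bound $\sum_{m\le M}\bigl|\sum_{n\le N}\chi_m(n)\bigr|^2\ll (MN)^\varepsilon(M+N)N$ combined with a Hölder step.

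The key quantitative step, and the main obstacle, is to bound the contribution of $m\sim M$ in the critical range by
\[
R^{55/42+\varepsilon}.
\]
The plan is a three-way split of the $m$-range. For small $M$ we use the trivial bound together with Poisson. For large $M$ we use the quadratic large sieve on the dual sum. In the middle we use the new bilinear estimate, which treats the quadratic character sum with a Vinogradov-type decomposition of $m=m_1m_2$ into Type I and Type II pieces, followed by Heath-Brown's large sieve on the Type II pieces. I would first try choosing the split points to balance the bounds
\[
R^{3}M^{-1/2},\qquad R\,M^{1/2},\qquad R^{a}M^{b}
\]
coming from the new middle estimate. The target is exponents $a,b$ such that the balanced maximum is $R^{55/42}$. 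I expect that obtaining a middle-range bilinear bound strong enough to beat $21/16$ is the genuinely hard part; everything else is routine bookkeeping of smooth weights and the $2$-adic and square-factor corrections in Gauss's formula.
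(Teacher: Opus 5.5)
There is a genuine gap, and it is structural: no choice of middle-range estimate can repair it.

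With the sharp cutoff $n\le R^2$, partial summation shows that $\sum_{n\le R^2}\sqrt n\bigl(\frac{-n}{m}\bigr)$ typically has size $\asymp R\sqrt m$ for $m\le R^2$, and $\asymp R^2$ beyond. So the quantity $\sum_{m\sim M}\frac1m\bigl|\sum_{n\le R^2}\sqrt n\bigl(\frac{-n}{m}\bigr)\bigr|$ you propose to bound is itself of order $R\sqrt M$, i.e.\ about $R^2$ once $M\approx R^2$. The $m$-range must reach at least the conductor $\approx R^2$: even on GRH, truncating at $M$ leaves $M^{-1/2}$ per $n$, hence $R^3M^{-1/2}$ in total. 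Taking absolute values in $m$ over a full-length $n$-sum discards exactly the cancellation in $m$ that makes $L(1,\chi_n)$ converge.

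Your Poisson step is also off. Dual sums of length $M/R^2$ arise only for a smooth weight $w(n/R^2)$, which no longer counts $S(R)$. With the sharp cutoff the transform decays only like $1/h$, and you get essentially $R\sum_m\sum_{h\ne0}m^{-1/2}h^{-1}\bigl(\frac hm\bigr)e(hR^2/m)$, an exponential-sum problem equivalent to the original.

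The missing idea, which is also the Chamizo--Iwaniec and Heath-Brown framework, is to smooth the ball at scale $H$ and write $S(R)=S^{(H)}(R)-B^{(H)}(R)$. The smooth count $S^{(H)}$ is handled by Poisson summation on $\mathbb Z^3$ and bounds for $\sum r_3(n)v(n/N)e(\rho\sqrt n)$, via the Chamizo--Iwaniec reduction plus Bourgain's exponent pair; this costs $RH^{-1/2}$. Only the boundary layer $R^2<n\le(R+H)^2$, of length $\asymp RH$, goes through Gauss's formula. There the $n$-sums are short enough that absolute values over $m$ are affordable.

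The decisive bound is also left open in your plan, and the tools you list do not address the real obstruction. A Type I/II split of $m$ is unmotivated when $m$ carries only a smooth weight, and fourth moments of $L(\tfrac12+it,\chi_m)$ do not touch the issue. After Poisson in the short $n$-sum, the dual sums $\sum_t\bigl(\frac tb\bigr)e\bigl(-Nt/(8eb)\bigr)\widehat\Phi\bigl(Lt/(8eb)\bigr)$ carry an additive phase that depends on the modulus $b$. So the quadratic large sieve cannot be applied directly.

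The paper's new input is Lemma~\ref{lem:fourth-moment}. It bounds $\sum_b\sup_\theta\bigl|\sum_{T\le t<2T}\bigl(\frac tb\bigr)e(\theta t)\bigr|$ through a fourth moment, in these steps:
\begin{enumerate}
\item Square the sum and group terms by $t_1+t_2=h$.
\item Apply Heath-Brown's large sieve to $\sum_t\bigl(\frac{t(h-t)}b\bigr)$.
\item Count solutions of $t_1(h-t_1)t_2(h-t_2)=\square$, showing there are $O(T^{1+\varepsilon})$ of them via Cilleruelo--Garaev.
\end{enumerate}
Interpolating the resulting Proposition~\ref{prop:fourth} with the Petrow--Young Weyl bound gives a nonsquare boundary contribution $\ll R^{\varepsilon}\bigl(R+R^{10/9}(RH)^{1/2}+R^{7/6}(RH)^{3/8}\bigr)$. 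The choice $H=R^{-13/21}$ then makes $R^{7/6}(RH)^{3/8}$ and $RH^{-1/2}$ both equal to $R^{55/42}$.
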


Note that 
$$\frac{21}{16}=\frac{55}{42}+\frac{1}{336}.$$
Incoporating the recent $7/8$-quasi-GRH result announced by OpenAI~\cite{OpenAIQRH},
into our argument, we obtain a further improvement.
\begin{theorem}\label{thm:qrh}
For every
$\varepsilon>0$,
\[
 S(R)=\frac{4\pi}{3}R^3+O_\varepsilon(R^{\frac{2274811}{1743872}+\varepsilon}).
\]
\end{theorem}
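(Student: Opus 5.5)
We follow the strategy of Chamizo--Iwaniec and Heath-Brown, and obtain Theorem~\ref{thm:qrh} by re-running the proof of Theorem~\ref{thm:main} with the quasi-GRH zero-free region as the only new input.

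\textbf{Reduction to character sums.} Writing $r_3(n)=\#\{\mathbf x\in\mathbb Z^3:\|\mathbf x\|^2=n\}$, Gauss's formula expresses $r_3(n)$ through class numbers, and hence through $L(1,\chi_{-n})$ with $\chi_{-n}$ the real character attached to $-n$. Inserting this into $S(R)=\sum_{n\le R^2}r_3(n)$ and expanding $L(1,\chi)=\sum_{m\le M}\chi(m)/m+\text{tail}$, the main term produces $\frac{4\pi}{3}R^3$. The error is then a combination of two pieces. The first is short exponential/character sums of the shape $\sum_{m\sim M}\frac1m\sum_{n\le R^2}\bigl(\frac{-n}{m}\bigr)$ with a smooth cutoff. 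The second is the tail of the $L$-series. Poisson summation in $n$ modulo $m$ turns the first piece into Gauss sums weighted by the smooth transform. This leads to bilinear sums of the type $\sum_{m\sim M}\sum_{h\sim H}e(hR^2\bar{\cdot}/m)$, or equivalently sums $\sum_m m^{-1/2}e(h^2R^2/(4m))$-type. We bound these by van der Corput/exponent-pair estimates, as in Heath-Brown's second-derivative and double large sieve treatment. The tail for large $m$ is controlled by zero-density or subconvexity information on $L(s,\chi)$ in $q$-aspect, averaged over moduli, using the large sieve for real characters (Heath-Brown's quadratic large sieve).

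\textbf{Key steps and where the gain comes from.} First, set up the truncation parameter $M$ and split dyadically. Second, for small $M$ treat the sums via Poisson plus stationary phase, producing exponential sums with phase $R^2h^2/m$. Improve Heath-Brown here by a sharper estimate for the resulting two-dimensional sum. We would apply a double large sieve or a Bombieri--Iwaniec-type first/second spacing argument, or a newer exponent pair, choosing the pair to optimise. Third, for large $M$ bound the tail using mean values of $L(1/2+it,\chi)$ over quadratic characters. This is where quasi-GRH enters: a zero-free region $\Re s>7/8$ gives pointwise bounds $L(1,\chi)$-tails $\ll m^{-1/8+\varepsilon}$-type savings beyond what density estimates give. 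Finally, balance the ranges. We expect optimisation of the form $R^{1+\theta}$ with $\theta$ determined by equating the bilinear-sum bound with the tail bound. The rational $55/42$ should come out of balancing the exponent pair against the quadratic large sieve. The ugly rational $2274811/1743872$ should come from balancing against the $7/8$ input, presumably via an iterated exponent pair computation.

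\textbf{Main obstacle.} The hard part is the second step: obtaining an estimate for the bilinear exponential sum strictly better than Heath-Brown's in the critical range $M\approx R^{\alpha}$. We would first try to keep the sum over $h$ unsmoothed and apply the double large sieve with the spacing of the points $(h/\sqrt m)$. We would then count close pairs with a lattice-point argument improved by known bounds on $r_2$-type counts, verifying the spacing lemma that yields the extra $1/336$ saving.
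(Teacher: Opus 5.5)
Your outline has a genuine gap. The step you flag as the ``main obstacle'' is in fact the whole content of the theorem, and you leave it as a menu of possible tools (double large sieve, spacing arguments, ``a newer exponent pair'') with no estimate stated or proved. So no exponent at all follows from what you have written, let alone $\frac{2274811}{1743872}$. The architecture is also not one in which the required savings appear. The paper does not insert the class-number formula for every $n\le R^2$. It smooths the ball at scale $H=R^{-3/5}$ and writes $S(R)=S^{(H)}(R)-B^{(H)}(R)$. The bulk $S^{(H)}$ is handled by three-dimensional Poisson summation, which reduces it to the sums $V_N(\rho)=\sum_n r_3(n)v(n/N)e(\rho\sqrt n)$ and an exponent-pair problem. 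Class numbers are used only on the thin shell $R^2<n\le(R+H)^2$, where the $n$-sums have length $L\ll RH/d^2$.

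You also misplace the source of the exponent. The quasi-GRH input is used only to shorten the approximation to $L(1,\chi_n)$ to length $Y\asymp q^{5/13}$, with error $q^{\varepsilon}Y^{-1/8}$. This is combined with Proposition~\ref{prop:fourth}: Poisson in $n$, then Lemma~\ref{lem:fourth-moment}, which squares out, groups by $h=t_1+t_2$, applies the quadratic large sieve, and counts $t_1(h-t_1)t_2(h-t_2)=\square$ via Cilleruelo--Garaev. Together with the Petrow--Young bound and $\min(A+B,C)\le A+B^{4/5}C^{1/5}$, this makes the error in the boundary term $O(R^{339/260+\varepsilon})$. That is strictly \emph{smaller} than the target, so nothing is balanced against the $7/8$. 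The exponent $\Theta=\frac{2274811}{1743872}$ is dictated entirely by the lattice-point side. One takes the convex combination $(\kappa_*,\lambda_*)$ of $B(P_1)$ and $B(P_2)$, for the Trudgian--Yang pairs $P_1,P_2$, lying on the line $2\lambda-\kappa=1$. This removes the $N$-dependence in Lemma~\ref{lem:exponent-pair} and gives $V_N/N\ll R^{\varepsilon}\bigl(N^{1/4}+R^{(1+\kappa_*)/4}\bigr)$, hence $\Theta=1+(1+\kappa_*)/4$. The remaining terms $RH^{-1/2}=R^{13/10}$ and $R^{-1}H^{-3}=R^{4/5}$ are smaller. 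The $1/336$ you mention belongs to Theorem~\ref{thm:main}, not this theorem. There too it comes from the arithmetic fourth-moment estimate together with Bourgain's exponent pair, not from a spacing argument for phases $h^2R^2/(4m)$.
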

Our proof of Theorem \ref{thm:main} uses a reduction of Chamizo and Iwaniec
\cite{Cham} to class numbers in short intervals and hence bilinear forms with real characters of the type
\begin{align*}
\sum_{K\le n \le K+N}\sum_{m\sim M}\left(\frac{n}{m}\right)
\end{align*}
with $(n/m)$ denoting the Legendre symbol. One should think of $N$ being much smaller than $K$ so in this scenario a direct application of Heath-Brown's quadratic large sieve~\cite{HBsieve} is not effective due to small summation length compared to conductor. Variations of~~\cite{HBsieve} were used by Heath-Brown in his work on the sphere problem~\cite{HeathBrown} and our main novelty is to improve on~\cite[Lemma~5]{HeathBrown}, see Section~\ref{sec:4th} below, by first performing some manipulations so that a reduction to the quadratic large sieve involves counting how often the product of two values of a quadratic polynomial are a square. A similar problem was considered by Chinis and Shala~\cite{ChinisShala} in their work on the random Chowla conjecture and our approach follows along these lines  via reduction to  estimates of Cilleruelo and Garaev~\cite{CillGaraev}.

Along the way, we make use of other progress in analytic number theory, including Bourgain's exponent pair~\cite{Bourgain} and Conrey and Iwaniec's~\cite{CoIw} Weyl strength subconvexity estimate for real characters (which we use in the form as stated by Petrow and Young~\cite{PetrowYoung}).

\section{The smoothing decomposition}\label{sec:decomposition}
We may assume $R\ge2$. Throughout the proof, put
\begin{equation}\label{eq:H}
 H=R^{-13/21}.
\end{equation}
Choose a fixed nonincreasing smooth function $\rho_0$ on $\mathbb R$
with $0\le\rho_0\le1$, $\rho_0(u)=1$ for $u\le0$, and $\rho_0(u)=0$
for $u\ge1$. Set
\[
 f(r)=\rho_0\left(\frac{r-R}{H}\right)\qquad(r\ge0).
\]
Then $f=1$ on $[0,R]$, $f=0$ on $[R+H,\infty)$, and
$f^{(j)}\ll_j H^{-j}$ for $j\ge1$. 

Let $r_3(n)$ be the number of representations of $n$ as a sum of
three integer squares, with signs and order counted.  Define
\[
 S^{(H)}(R)=\sum_{\mathbf x\in\mathbb Z^3}f(\|\mathbf x\|),
 \qquad
 B^{(H)}(R)=\sum_{R^2<n\le(R+H)^2}r_3(n)f(\sqrt n).
\]
We have the exact identity
\begin{equation}\label{eq:decomposition}
 S(R)=S^{(H)}(R)-B^{(H)}(R).
\end{equation}
Thus, Theorem~\ref{thm:main} follows from the following two propositions.

\begin{proposition}\label{prop:arithmetic}
With $H$ as in \eqref{eq:H},
\[
 B^{(H)}(R)=4\pi\int_R^{R+H}r^2f(r)\,dr
              +O_\varepsilon(R^{55/42+\varepsilon}).
\]
\end{proposition}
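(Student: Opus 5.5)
Our plan for Proposition~\ref{prop:arithmetic} is to follow the Chamizo--Iwaniec reduction to class numbers in short intervals. We then estimate the resulting short character sums by splitting the moduli into ranges.

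\emph{Step 1: Gauss's formula.} We express $r_3(n)$ through Gauss's formula. Writing $n=4^a n'$ with $4\nmid n'$, the value $r_3(n)$ is determined by $n'$, and for squarefree-type $n'$ it equals $c(n')\,h(-4n')$ (up to the usual factors $12$ or $24$ depending on $n'\bmod 8$, and vanishing when $n'\equiv 7 \bmod 8$). Via the class number formula this becomes
\[
 r_3(n)=\frac{c\sqrt n}{\pi}\,L(1,\chi_{-4n})\times(\text{local factors at }2\text{ and at square divisors}).
\]
More conveniently, we write
\[
 r_3(n)=\sqrt n\sum_{d^2\mid n}\sum_{m\ge1}\frac{\psi(n,d,m)}{m}\Big(\frac{-n/d^2}{m}\Big)
\]
in a smoothed form.

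\emph{Step 2: main term and truncation.} We truncate the $m$-sum smoothly at $m\le M$. The contribution of $m>M$ is handled via the Pólya--Vinogradov/Burgess bounds, or better via the Conrey--Iwaniec Weyl-strength subconvexity $L(1/2+it,\chi_q)\ll q^{1/6+\varepsilon}$ in the Petrow--Young form after a contour shift, which controls $L$-value tails on average over $n$ in the short interval $R^2<n\le (R+H)^2$ of length $\asymp RH$. The $m=1$ term (more precisely, the average of the Euler product over $n$) produces, after partial summation, the main term $4\pi\int_R^{R+H} r^2f(r)\,dr$. This amounts to the singular series computation matching the volume of the shell weighted by $f$.

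\emph{Step 3: the error terms.} The remaining error consists of sums
\[
 \sum_{m\sim M}\frac1m\sum_{K\le n\le K+N} w(n)\Big(\frac{-n}{m}\Big),\qquad K=R^2,\ N\asymp RH=R^{8/21},
\]
with smooth weights $w$. For small $m$ (say $M\le N^{1-\delta}$) we apply Poisson summation in $n$ modulo $m$. This gives dual sums of Gauss sums times $e(Kh/m)$, which we treat as exponential sums in $m$ using Bourgain's exponent pair. For medium and large $M$ we use the bilinear/large-sieve treatment, improving \cite[Lemma~5]{HeathBrown}. We factor $m$ or apply Cauchy--Schwarz to reduce to counting solutions of $P(x)P(y)=\square$ for a quadratic polynomial $P$. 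Those counts we bound using the Cilleruelo--Garaev estimates, as in Chinis--Shala.

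\emph{Step 4: balancing.} Finally, we balance the Poisson/exponent-pair range against the large-sieve range and the subconvexity tail. The choice $H=R^{-13/21}$ is exactly what makes all three contributions $\ll R^{55/42+\varepsilon}$.

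The main obstacle is Step~3 in the intermediate range of $M$, around $M\approx N$. There neither Poisson nor the classical quadratic large sieve is effective, because $N$ is small relative to the conductor $K$. The new fourth-moment-type bound via products of quadratic values being squares is what must be carried out carefully, keeping track of the exact dependence on $K,N,M$ so that the optimisation yields the exponent $55/42$.
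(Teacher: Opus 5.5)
You have the right ingredients, but the estimate that actually proves the proposition is not supplied, and the way you arrange the tools would not produce it. In the range $M\le N^{1-\delta}$ where you use Poisson summation, there is nothing to estimate. The dual sum has effective length $M/N\le N^{-\delta}$, so every nonzero frequency is negligible by the decay of $\widehat w$. The zero frequency vanishes because $(\frac{\cdot}{m})$ is nonprincipal for nonsquare $m$. Bourgain's exponent pair plays no role in this proposition; the paper uses it only for $S^{(H)}(R)$ in Proposition~\ref{prop:analytic}.

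In the range $M\gg N$, where all the difficulty lies, you apply the quadratic large sieve without first transforming the $n$-sum. Since $n\asymp K=R^2/d^2$, the loss factor in \eqref{eq:quadratic-large-sieve} is at least $K$, and $K^2$ if you pass to a quadratic polynomial in $n$. Because $N\asymp RH/d^2$ is far smaller than $K^{2/3}$, the result is never better than Lemma~\ref{lem:PY}; this is exactly the obstruction mentioned in the introduction. With only Petrow--Young, the blocks $J\le Q^{2/3}$ (where it is worse than trivial) remain uncontrolled, and you get nothing beyond $R^2H=R^{29/21}$. The critical range is also not $M\approx N$ but $J\asymp Q^{1/3}L^{5/4}$, about $R^{17/21}$ for $d=1$.

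The missing idea is to Poisson-sum in $n$ for every nonsquare $m$, and only then use the large sieve, on the dual side. Write $m=bc^2$ and remove $c$ by M\"obius inversion ($n=er$, $e\mid c$). Poisson modulo $8b$ then turns the $n$-sum into $\frac{L\tau_b}{8eb}(\frac 8b)\sum_{t\ne0}(\frac tb)e(\theta_b t)\widehat\Phi(\frac{Lt}{8eb})$. This is a sum of length $T_0=Je/(c^2L)$ in which the character acts on the short variable $t$, at the cost of an additive phase $\theta_b$ that depends on the modulus. Lemma~\ref{lem:fourth-moment} controls $\sum_{b\sim A}\sup_\theta|\sum_{t\sim T}(\frac tb)e(\theta t)|$ in three steps:
\begin{itemize}
\item grouping by $h=t_1+t_2$ gives $|S_b(\theta)|^4\ll T\sum_h|U_{b,h}|^2$, with $U_{b,h}=\sum_t(\frac{t(h-t)}b)$ free of $\theta$;
\item the large sieve in $b$ then costs only $A+T^2$;
\item the diagonal counts solutions of $t_1(h-t_1)t_2(h-t_2)=\square$, which is $O(T^{1+\varepsilon})$ by Cilleruelo--Garaev applied to $(2t-h)^2+4\delta v^2=h^2$.
\end{itemize}
So the quadratic polynomial is $t(h-t)$ in the dual variable, and the fourth power is what removes the $b$-dependent phase; your sketch identifies neither.

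This yields Proposition~\ref{prop:fourth}, $S_\nu\ll(JL)^{1/4}+J^{1/2}L^{-1/4}$. Interpolating with $Q^{1/3}LJ^{-1/2}$ gives $E_\nu(d;L,J)\ll Q^{10/9}L^{1/2}+Q^{7/6}L^{3/8}$. Summing over $L\ll RH/d^2$ and $d$ gives $R^{82/63}+R^{55/42}$. That is where $55/42$ comes from, and $H=R^{-13/21}$ balances it against the analytic term $RH^{-1/2}$, not against an exponent-pair range.

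Two smaller points. First, your weight in $n$ is not smooth, since $f(d\sqrt n)=1$ at the sharp endpoint $n=R^2/d^2$. This is why the paper introduces the dyadic parameter $L$ for $n-R^2/d^2$ and needs bounds increasing in $L$. Second, the main term comes from all squares $m=a^2$, not just $m=1$, and these must be excluded from your Step~3.
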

\begin{proposition}\label{prop:analytic}
With $H$ as in \eqref{eq:H},
\[
 S^{(H)}(R)=4\pi\int_0^\infty r^2f(r)\,dr
              +O_\varepsilon(R^{55/42+\varepsilon}).
\]
\end{proposition}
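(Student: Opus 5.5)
The plan is to apply Poisson summation in $\mathbb Z^3$ to the radial function $F(\mathbf x)=f(\|\mathbf x\|)$. This gives
\[
 S^{(H)}(R)=\sum_{\mathbf m\in\mathbb Z^3}\widehat F(\mathbf m),\qquad \widehat F(\mathbf 0)=4\pi\int_0^\infty r^2f(r)\,dr,
\]
so the main term of Proposition~\ref{prop:analytic} is exactly the zero frequency. It remains to show that the nonzero frequencies contribute $O_\varepsilon(R^{55/42+\varepsilon})$.

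\textbf{Step 1: evaluate $\widehat F$.} For a radial function in three dimensions,
\[
 \widehat F(\boldsymbol\xi)=\frac{2}{\|\boldsymbol\xi\|}\int_0^\infty rf(r)\sin(2\pi r\|\boldsymbol\xi\|)\,dr .
\]
Integrating by parts once moves the derivative onto $f$. Since $f'$ is supported on $[R,R+H]$ with total mass $1$, this yields
\[
 \widehat F(\mathbf m)=\frac{1}{\pi\|\mathbf m\|^2}\int_R^{R+H}(-f'(r))\,r\cos(2\pi r\|\mathbf m\|)\,dr+O(\|\mathbf m\|^{-3}).
\]
Repeated integration by parts against $f^{(j)}\ll H^{-j}$ gives the decay $\widehat F(\mathbf m)\ll_A R\|\mathbf m\|^{-2}(1+H\|\mathbf m\|)^{-A}$. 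Hence frequencies with $\|\mathbf m\|>R^{\varepsilon}H^{-1}$ are negligible. The error term $O(\|\mathbf m\|^{-3})$ summed over $\|\mathbf m\|\le R^\varepsilon H^{-1}$ contributes only $\ll R^{\varepsilon}$. Writing $-f'(r)\,dr$ as an average over $r\in[R,R+H]$ reduces the problem, uniformly in $r\in[R,R+H]$ and for dyadic $M\le R^{\varepsilon}H^{-1}=R^{13/21+\varepsilon}$, to bounding
\[
 \frac{R}{M^2}\Bigl|\sum_{\mathbf m\in\mathbb Z^3}w\Bigl(\frac{\|\mathbf m\|}{M}\Bigr)e\bigl(r\|\mathbf m\|\bigr)\Bigr|
\]
by $R^{55/42+\varepsilon}$, with $w$ a smooth bump supported in $[1,2]$. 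The trivial bound is $RM$, which suffices for $M\le R^{13/42}$. The real content is therefore the range $R^{13/42}<M\le R^{13/21+\varepsilon}$. At the top of this range we need a saving of $R^{13/42}$ over $M^3$, that is, a bound of about $M^{5/2}$.

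\textbf{Step 2: exponential sum over the lattice.} I would write $\mathbf m=(a,b,c)$, fix the two outer variables $(a,b)$, and treat the inner sum
\[
 \sum_c w(\cdot)\,e\bigl(r\sqrt{a^2+b^2+c^2}\bigr),
\]
where the phase has first derivative of size $\asymp R$ in $c$ and length $\asymp M$. The phase is a genuinely curved, homogeneous function, since its derivatives in $c$ behave like $R M^{1-j}$. This lets me apply an exponent pair $(k,\ell)$, which gives an inner bound $(R/M)^kM^\ell$. A one-variable pair alone is not enough: even Bourgain's pair yields a total exponent around $1+13/28$, too large. So my plan is instead to keep two variables $(b,c)$ active and use a two-dimensional estimate, as in Chamizo--Iwaniec~\cite{Cham} and Heath-Brown~\cite{HeathBrown}. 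Concretely, I would first apply the $B$-process (Poisson in $c$, stationary phase) to convert the sum into a shorter dual sum whose phase is again of sphere type, namely $\sqrt{r^2-h^2}\,\sqrt{a^2+b^2}$-like in the remaining variables. I would then apply Bourgain's exponent pair~\cite{Bourgain} to the dual sum in $b$, and sum trivially over $a$. The parameters will be arranged so that the balance point is exactly at $H=R^{-13/21}$; that choice of $H$ equates this analytic bound with the arithmetic bound of Proposition~\ref{prop:arithmetic}.

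\textbf{Main obstacle.} The hardest step is securing a saving $M^{5/2}$ in the top range $M\approx R^{13/21}$. If the direct $B$-process-plus-Bourgain route falls short, my fallback is to group the frequencies by $n=\|\mathbf m\|^2$. This rewrites the sum as $\sum_n r_3(n)\,w(\sqrt n/M)\,e(r\sqrt n)$. I would then express $r_3(n)$ through $L(1,\chi_{-n})$ and a short character sum, and handle the resulting sums with the same Chamizo--Iwaniec machinery used for $B^{(H)}(R)$. That route is heavier, and I would attempt the purely analytic estimate first.
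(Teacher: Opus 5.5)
Your Step 1 is sound and matches the paper. Poisson summation, one integration by parts and truncation at $\|\mathbf m\|\approx H^{-1}$ reduce the proposition to a bound, uniform in $\rho\in[R,R+H]$, for $V_N(\rho)=\sum_n r_3(n)v(n/N)e(\rho\sqrt n)$ with $N=M^2$. Your target is also correct: $\ll M^{5/2}$ at $M\approx H^{-1}$, and more generally $V_N\ll R^{\varepsilon}(N^{5/4}+R^{13/42}N)$ for $N\le H^{-2}$.

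The gap is Step 2, where all the difficulty lies, and the route you describe cannot reach this target. For fixed $(a,b)$ the $c$-derivative of $r\|\mathbf m\|$ sweeps an interval of length $\asymp R\gg M$. So Poisson in $c$ produces a \emph{longer} dual sum, not a shorter one: about $R$ terms of size $(M/R)^{1/2}$, already trivially of size $(MR)^{1/2}>M$. If you apply a pair $(\kappa,\lambda)$ in $b$ to the dual phase $\sqrt{a^2+b^2}\sqrt{r^2-h^2}$, you get $M^{3/2+\lambda}R^{1/2+\kappa}$ for the whole sum. For Bourgain's pair this is worse than the trivial $M^3$ throughout $M\le R^{13/21}$. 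If you apply the pair in $h$ instead, you merely reproduce a one-variable pair in $c$, giving $M^{5/2+\kappa}R^{\lambda-1/2}$. With Bourgain's pair this is adequate only for $M\le R^{13/55}$, where the trivial bound already suffices.

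Structurally, once one coordinate is summed trivially, you need a two-dimensional sum of $M^2$ terms with phase of size $RM$ to be $\ll M^{3/2}$ at $M\approx R^{13/21}$. That is square-root cancellation in a whole variable, which no known two-dimensional estimate provides. Your fallback through $L(1,\chi_{-n})$ is also not what is needed; the paper uses no $L$-functions for this part.

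The missing idea is the Chamizo--Iwaniec fourth-moment reduction, which genuinely uses all three coordinates. Grouping by $n$, one splits a coordinate of length $N^{1/2}$ into blocks of length $N^{1/2-\delta}$. Cauchy--Schwarz then turns the phase into $\rho(\sqrt t-\sqrt{t+y})$, where $y$ is a difference of two squared coordinates with $|y|\ll N^{1-\delta}$; this is what shrinks the oscillation. Poisson and stationary phase in $t$, followed by a second Cauchy--Schwarz with shift $z$, give
$|V_N|^4\ll N^5+\rho^{-1}N^{9/2}\sum_{x\asymp U}\sum_{z\asymp T}\bigl|\sum_{y\asymp D}e(G(x,y,z))\bigr|$.
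Here $G=\Lambda_{x,z}y^{2/3}+u_{x,z}(y)$, and $u_{x,z}$ is a perturbation with controlled derivatives. Bourgain's pair applied to the $y$-sum yields $V_N\ll R^{\varepsilon}(N^{5/4}+\rho^{97/336}N^{685/672})$.

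Even this is not enough by itself: at $N=H^{-2}$ its second term is $R^{1103/3528}N$, and $1103/3528>13/42$. The paper therefore interpolates it, with weight $42/55$, against the Chamizo--Iwaniec second-derivative bound $R^{3/8}N^{15/16}+R^{1/8}N^{17/16}$, with weight $13/55$. This gives $V_N/N\ll R^{\varepsilon}(N^{1/4}+R^{1/8}N^{1/16}+R^{17/55})$.

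Note also where the choice $H=R^{-13/21}$ comes from. It is fixed by the diagonal term $N^{5/4}$, which produces $RH^{-1/2}$, not by the exponent pair. The pair only has to push the remaining term below $R^{13/42}$, and it does so barely, since $17/55<13/42$.
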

We prove Proposition~\ref{prop:arithmetic} in Section~\ref{sec:arithmetic}
and Proposition~\ref{prop:analytic} in Section~\ref{sec:analytic}.

\section{The arithmetic boundary term}\label{sec:arithmetic}
\subsection{Truncation and separation of the squares}
This section follows some of the manipulations of Chamizo and Iwaniec~\cite[pp. 418 and 426]{Cham}. Let $R_3(n)$ count the primitive representations of $n$ as a sum of three squares:
\[
R_3(n)=\#\{(x_1,x_2,x_3)\in\mathbb Z^3:x_1^2+x_2^2+x_3^2=n,\ \gcd(x_1,x_2,x_3)=1\}.
\] Then
\[
 r_3(N)=\sum_{d^2\mid N}R_3(N/d^2).
\]
By~\cite[(1.1)--(1.2)]{Cham}, for $n\equiv\nu\pmod8$, the class-number formula gives
\begin{equation}\label{eq:class-number}
 R_3(n)=\frac{c_\nu}{\pi}\sqrt n\,L(1,\chi_n),
 \qquad \chi_n(m)=\left(\frac{-4n}{m}\right),
\end{equation}
where $(-4n/m)$ denotes the the Kronecker symbol and
\[
 c_1=c_2=c_5=c_6=24,\qquad c_3=16,\qquad c_0=c_4=c_7=0.
\]

Thus
\begin{equation}\label{eq:boundary-class}
 B^{(H)}(R)=\frac1\pi\sum_{d\le R+H}\sum_{\nu\bmod8}c_\nu
 \sum_{\substack{R^2/d^2<n\le(R+H)^2/d^2\\n\equiv\nu\;(8)}}
 \sqrt n\,f(d\sqrt n)L(1,\chi_n).
\end{equation}
Every character $\chi_n$ here is nonprincipal: its inducing primitive
character has negative fundamental discriminant.

Choose a fixed smooth function $U$ on $[0,\infty)$ that is one on
$[0,1]$, zero on $[2,\infty)$, and satisfies $0\le U\le1$.
For a nonprincipal character modulo $q$, partial summation and the
P\'olya--Vinogradov inequality give
\begin{equation}\label{eq:truncation}
 L(1,\chi)=\sum_{m\ge1}\frac{\chi(m)}m U(m/Y)
       +O\left(\frac{\sqrt q\log(2q)}Y\right)\qquad(Y\ge q).
\end{equation}
Indeed, the partial sums of $\chi$ are $O(\sqrt q\log(2q))$, and
\[
 \int_Y^\infty\left|\frac{d}{dt}
       \frac{1-U(t/Y)}t\right|\,dt\ll Y^{-1}.
\]
For each $d$, take $Z_d=16R^2/d^2$. On the interval in
\eqref{eq:boundary-class}, we have $4n\le Z_d$ and $n\asymp R^2/d^2$.
After multiplication by $\sqrt n$, the error in \eqref{eq:truncation}
is therefore $O(\log(2R))$ per pair $(d,n)$. There are
\begin{equation}\label{eq:pair-count}
 \sum_{d\le R+H}O\left(1+\frac{RH}{d^2}\right)=O(R)
\end{equation}
such pairs. Thus the error induced in~\eqref{eq:boundary-class} by truncating $L(1,\chi_n)$ in~\eqref{eq:truncation} is $O(R\log(2R))$.

When $m=a^2$, the character value is $1$ if $(a,4n)=1$ and zero
otherwise. We may replace $U(a^2/Z_d)$ by one in these terms: the
error per pair $(d,n)$ is at most
\[
 \sqrt n\sum_{a>\sqrt{Z_d}}a^{-2}\ll1,
\]
and \eqref{eq:pair-count} bounds the total error by $O(R)$.
It follows that
\begin{equation}\label{eq:boundary-split}
 B^{(H)}(R)=M(R)+E(R)+O(R\log(2R)),
\end{equation}
where
\begin{align}
 M(R)&=\frac1\pi\sum_{d\le R+H}\sum_{\nu\bmod8}c_\nu
 \sum_{\substack{R^2/d^2<n\le(R+H)^2/d^2\\n\equiv\nu\;(8)}}
 \sqrt n\,f(d\sqrt n)\sum_{(a,4n)=1}\frac1{a^2},
 \label{eq:M}\\
 E(R)&=\frac1\pi\sum_{d\le R+H}\sum_{\nu\bmod8}c_\nu
 \sum_{\substack{R^2/d^2<n\le(R+H)^2/d^2\\n\equiv\nu\;(8)}}
 \sqrt n\,f(d\sqrt n)\notag\\
 &\hspace{18mm}\times\sum_{m\ne\square}\frac{\chi_n(m)}m U(m/Z_d).
 \label{eq:E}
\end{align}

\subsection{The square contribution}
For an odd positive integer $a$ and a continuously differentiable
function $g$ on $[X,Y]$, elementary summation in arithmetic progressions
gives
\begin{equation}\label{eq:coprime-ap}
 \sum_{\substack{X<n\le Y\\n\equiv\nu\;(8)\\(n,a)=1}}g(n)
 =\frac{\varphi(a)}{8a}\int_X^Y g(x)\,dx
 +O\left(\tau(a)\left(\|g\|_\infty+\int_X^Y|g'(x)|\,dx\right)\right).
\end{equation}
Here $\varphi$ is Euler's totient function and $\tau$ counts positive
divisors. To see this, expand $1_{(n,a)=1}=\sum_{k\mid(n,a)}\mu(k)$.
For each $k\mid a$, the two congruences specify one residue class
modulo $8k$. Partial summation gives the integral with coefficient
$1/(8k)$ and error $O(\|g\|_\infty+\int|g'|)$. Summing over $k$
proves \eqref{eq:coprime-ap}.

Apply this with $g(x)=\sqrt x f(d\sqrt x)$ on the interval in
\eqref{eq:M}. The derivative is
\[
 g'(x)=\frac{f(d\sqrt x)}{2\sqrt x}+\frac d2 f'(d\sqrt x).
\]
The substitution $r=d\sqrt x$ and the bounds for $f$ show that
\[
 \|g\|_\infty+\int|g'(x)|\,dx\ll R/d,
 \qquad
 \int g(x)\,dx=\frac2{d^3}\int_R^{R+H}r^2f(r)\,dr.
\]
Since $\sum_a\tau(a)a^{-2}$ converges, we can sum the errors in
\eqref{eq:coprime-ap} with weights $a^{-2}$. The resulting error for
fixed $d,\nu$ is $O(R/d)$. The coefficient of the integral is determined
by
\[
 \sum_{\substack{a\ge1\\a\text{ odd}}}\frac{\varphi(a)}{a^3}
 =\prod_{p\ne2}\frac{1-p^{-3}}{1-p^{-2}}
 =\frac{6\zeta(2)}{7\zeta(3)}.
\]
Consequently, the inner sum in \eqref{eq:M} equals
\[
 \frac{3\zeta(2)}{14\zeta(3)d^3}
       \int_R^{R+H}r^2f(r)\,dr+O(R/d).
\]
Now $\sum_{\nu\bmod8}c_\nu=112$ and
$\sum_{d\le R+H}d^{-3}=\zeta(3)+O(R^{-2})$. The error from extending
this last sum is $O(1)$, because the integral is $O(R^2H)$. Using
$\zeta(2)=\pi^2/6$ gives
\begin{equation}\label{eq:squares-evaluated}
 M(R)=4\pi\int_R^{R+H}r^2f(r)\,dr+O(R\log(2R)).
\end{equation}

\subsection{Decomposition of the nonsquare contribution}
We next introduce a smooth partition of unity for summation over $n$ in~\eqref{eq:E}.
Choose a fixed nonincreasing smooth function $\vartheta$ that is one
on $[0,4/3]$, zero on $[5/3,\infty)$, and between zero and one.
Put $\Psi(u)=\vartheta(u)-\vartheta(2u)$ for $u>0$, extended by zero
for $u\le0$. Then $\Psi$ is smooth, nonnegative, and supported in
$[2/3,5/3]$. Telescoping gives
\[
 1=\vartheta(x)+\sum_{\substack{L=2^\ell\\\ell\ge1}}\Psi(x/L)
 \qquad(x>0).
\]
Use this partition with $x=n-R^2/d^2$. The term with $\vartheta$
contains only $O(1)$ integers $n$ for each $d$. Since $\sqrt n\ll R/d$
and $\sum_{m\le2Z_d}m^{-1}\ll\log(2R)$, its total contribution to
$E(R)$ is $O(R\log^2(2R))$.

For the other terms, fix a nonnegative smooth function $w$ supported
in $(1/2,2)$ such that $\sum_{j\in\mathbb Z}w(2^{-j}x)=1$ for $x>0$.
For an integer $m\ge1$, the terms with $j<0$ vanish. Insert this
partition in the $m$-sum. For $J=2^j$, $j\ge0$, and $L=2^\ell$,
$\ell\ge1$, define
\begin{equation}\label{eq:block}
 \begin{split}
 E_\nu(d;L,J)={}&
 \sum_{\substack{n>R^2/d^2\\n\equiv\nu\;(8)}}
 \sqrt n\,f(d\sqrt n)\Psi\left(\frac{n-R^2/d^2}{L}\right)\\
 &\hspace{12mm}\times\sum_{m\ne\square}\frac{\chi_n(m)}m
                   w(m/J)U(m/Z_d).
 \end{split}
\end{equation}
Thus
\begin{equation}\label{eq:E-blocks}
 E(R)=\frac1\pi\sum_{d\le R+H}\sum_{\nu\bmod8}c_\nu
              \sum_{L,J}E_\nu(d;L,J)+O(R\log^2(2R)).
\end{equation}
The support conditions imply
\begin{equation}\label{eq:block-ranges}
 J\ll R^2/d^2,\qquad L\ll RH/d^2,\qquad n\asymp R^2/d^2.
\end{equation}
In particular, there are $O(\log(2R))$ choices for each dyadic parameter.

We record the normalization used for both estimates. For fixed $d$,
write $Q=R/d$ and set
\[
 \begin{aligned}
 \Phi(u)&=Q^{-1}\sqrt{Q^2+Lu}\,f(d\sqrt{Q^2+Lu})\Psi(u),\\
 W(x)&=x^{-1}w(x)U(Jx/Z_d).
 \end{aligned}
\]
The first function is defined on the support of $\Psi$ and extended
by zero; the second is similarly extended outside $x>0$.
They are supported in fixed compact subintervals of $(0,\infty)$
and satisfy $\Phi^{(r)},W^{(r)}\ll_r1$. For $W$, this follows from
$J\ll Z_d$. For $\Phi$, put $a=L/Q^2\ll H/R$ and write
\[
 \Phi(u)=\sqrt{1+au}\,
 \rho_0\left(\frac RH(\sqrt{1+au}-1)\right)\Psi(u).
\]
Every positive-order derivative of the argument of $\rho_0$ is
$O_r((R/H)a^r)=O_r(1)$, proving the assertion.

For weights of this kind, define
\begin{equation}\label{eq:normalized-sum}
 S_\nu(N;L,J)=\frac1J\sum_{m\ne\square}W(m/J)
       \sum_{\substack{n\ge1\\n\equiv\nu\;(8)}}
       \Phi\left(\frac{n-N}{L}\right)\left(\frac{-4n}{m}\right).
\end{equation}
We have exactly
\begin{equation}\label{eq:normalization}
 E_\nu(d;L,J)=Q S_\nu(Q^2;L,J).
\end{equation}
The notation $S_\nu$ leaves the two weights implicit; all the estimates
below are uniform under the support and derivative bounds just proved.

\subsection{A fourth-moment estimate}
\label{sec:4th}

Our main novelty in proving Theorem~\ref{thm:main} is Proposition~\ref{prop:fourth} and improves on~\cite[Lemma~5]{HeathBrown}. Our next result will be used in the proof of Proposition~\ref{prop:fourth}.
\begin{lemma}\label{lem:fourth-moment}
For $A,T\ge1$,
\[
 \sum_{\substack{A\le b<2A\\b\text{ odd and squarefree}}}
 \sup_{\theta\in\mathbb R}
 \left|\sum_{T\le t<2T}\left(\frac tb\right)e(\theta t)\right|
 \ll_\varepsilon (AT)^\varepsilon
          \left(AT^{3/4}+A^{3/4}T^{5/4}\right).
\]

\end{lemma}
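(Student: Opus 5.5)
The plan is to apply Hölder's inequality to reduce to a fourth moment, and then bound that fourth moment with Heath-Brown's quadratic large sieve~\cite{HBsieve}. Throughout, write
\[
F_b(\theta)=\sum_{T\le t<2T}\left(\frac tb\right)e(\theta t).
\]

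\emph{Step 1: Hölder.} Hölder's inequality gives
\[
\sum_b \sup_\theta|F_b(\theta)|\le A^{3/4}\Bigl(\sum_b\sup_\theta|F_b(\theta)|^4\Bigr)^{1/4}.
\]
The target bound then follows from
\[
\sum_b\sup_\theta|F_b(\theta)|^4\ll (AT)^\varepsilon(AT^3+T^5).
\]
Raising to the power $1/4$ and multiplying by $A^{3/4}$ gives exactly $AT^{3/4}+A^{3/4}T^{5/4}$. This is a strong sign that this decomposition is the intended one.

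\emph{Step 2: removing the supremum.} The supremum over $\theta$ is handled by a standard device. One way is to note that $\sup_\theta |F_b(\theta)|$ is controlled, up to a $\log T$ factor, by a discrete set of $O(T)$ points $\theta_j=j/(4T)$ together with a Sobolev/Gallagher-type argument. Alternatively, one can complete the condition $t\in[T,2T)$ via the Fourier expansion of the indicator, which bounds $|F_b(\theta)|$ by $\log T$ times an average over $\beta$ of $|\sum_t (t/b)e(\beta t)|$ with $L^1$ weights. The cleanest version is to write
\[
F_b(\theta)^2=\sum_{u}\Bigl(\sum_{t_1t_2=u}1\Bigr)\left(\frac ub\right)e(\theta(t_1+t_2)).
\]
However, the phase still depends on $t_1+t_2$, not on $u$. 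I would instead handle the $\theta$-dependence by the following trick. Expanding $|F_b|^4$ gives a sum over $t_1,\dots,t_4$ of $\left(\frac{t_1t_2t_3t_4}{b}\right)e(\theta(t_1+t_2-t_3-t_4))$. Group by $k=t_1+t_2-t_3-t_4$, $|k|\le 2T$, so that
\[
|F_b(\theta)|^4\le\sum_{|k|\le 2T}\Bigl|\sum_{t_1+t_2-t_3-t_4=k}\left(\frac{t_1t_2t_3t_4}{b}\right)\Bigr|,
\]
which is now free of $\theta$. The inner sum is a character sum over products. Set $g_k(u)=\#\{t_i:t_1t_2t_3t_4=u,\ t_1+t_2-t_3-t_4=k\}$, so that the inner sum equals $\sum_{u\le 16T^4}g_k(u)(u/b)$.

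\emph{Step 3: large sieve and the main obstacle.} By Cauchy–Schwarz in $k$ and $b$, it suffices to bound
\[
\sum_b\Bigl|\sum_u g_k(u)\left(\frac ub\right)\Bigr|^2.
\]
Heath-Brown's large sieve bounds this by $(AU)^\varepsilon(A+U)\sum_{u_1u_2=\square}|g_k(u_1)g_k(u_2)|$, with $U=T^4$. The term $U=T^4$ in $(A+U)$ is far too large, so in this form the bound is too lossy. The real difficulty is therefore to avoid paying for the full conductor length $T^4$.

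My fallback, which I expect is what is actually needed, is to apply the large sieve to $F_b$ itself at each fixed $\theta$ and then handle the supremum separately. At fixed $\theta$, the large sieve gives
\[
\sum_b|F_b(\theta)|^2\ll(AT)^\varepsilon(A+T)T,
\]
and its fourth-moment form (taking $F_b^2$ as a sum of length $T^2$) gives
\[
\sum_b|F_b(\theta)|^4\ll(AT)^\varepsilon(A+T^2)T^2.
\]
To pass to the supremum, I would use Gallagher's inequality. It gives
\[
\sup_\theta|F_b|^4\ll T\int_0^1|F_b|^4\,d\theta+\int_0^1|F_b|^3|F_b'|\,d\theta,
\]
and $F_b'$ has the same structure with weight $2\pi i t$, so it costs a factor $T$. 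This yields
\[
\sum_b\sup_\theta|F_b|^4\ll (AT)^\varepsilon T(A+T^2)T^2=(AT)^\varepsilon(AT^3+T^5),
\]
which is exactly the bound required in Step 1. In this route the key point is the fourth-moment large sieve estimate uniform in $\theta$. This follows from~\cite{HBsieve} applied to the coefficients $c_u=\sum_{t_1t_2=u}e(\theta(t_1+t_2))$, whose squares-weighted norm is $\ll T^{2+\varepsilon}$ by divisor bounds. The Gallagher step and these divisor bounds are the parts I would check most carefully.
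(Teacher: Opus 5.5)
Your final route (the ``fallback'') is correct. It differs from the paper's in where the large sieve is applied.

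\textbf{The paper's route.} The paper writes $F_b(\theta)^2=\sum_{2T\le h<4T}e(\theta h)U_{b,h}$, with $U_{b,h}=\sum_t\left(\frac{t(h-t)}{b}\right)$. Cauchy then gives $\sup_\theta|F_b(\theta)|^4\ll T\sum_h|U_{b,h}|^2$. The quadratic large sieve is applied separately for each fixed $h$. The diagonal term there counts pairs with $t_1(h-t_1)t_2(h-t_2)=\square$. Bounding this by $T^{1+\varepsilon}$ requires the Cilleruelo--Garaev estimate for solutions of $(2t-h)^2+4\delta v^2=h^2$.

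\textbf{Your route.} You fix $\theta$ and view $F_b(\theta)^2$ as a character sum over $u=t_1t_2\le 4T^2$, with coefficients $c_u(\theta)$ satisfying $|c_u|\le\tau(u)$. The diagonal $\sum_{u_1u_2=\square}|c_{u_1}c_{u_2}|\ll T^{2+\varepsilon}$ then follows from divisor bounds alone. Integrating in $\theta$ gives $\sum_b\int_0^1|F_b|^4\,d\theta\ll(AT)^\varepsilon(A+T^2)T^2$. By Parseval, $\int_0^1|F_b|^4=\sum_h|U_{b,h}|^2$, so this is exactly what the paper obtains after summing its per-$h$ bounds over $h$.

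\textbf{Comparison.} Your argument is more elementary, and it shows the Cilleruelo--Garaev input is not needed for the lemma as stated. The paper's per-$h$ bound carries finer information, but that information is discarded once it sums over $h$.

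\textbf{Tightening.} In the Gallagher step, make explicit the H\"older inequality
$\sum_b\int|F_b|^3|F_b'|\le\bigl(\sum_b\int|F_b|^4\bigr)^{3/4}\bigl(\sum_b\int|F_b'|^4\bigr)^{1/4}$.
Pair it with the fourth-moment bound for $F_b'$: its coefficients satisfy $|c'_u|\ll T^2\tau(u)$, which gives an extra factor $T^4$. You can in fact avoid $F_b'$ entirely. Since $F_b^2$ has at most $2T$ frequencies, Cauchy gives $\sup_\theta|F_b|^4\le 2T\int_0^1|F_b|^4\,d\theta$, which is precisely the paper's first step. Finally, drop the abandoned attempts in Step~2, such as grouping by $t_1+t_2-t_3-t_4$.
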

\begin{proof}
Let 
$$S=\sum_{\substack{A\le b<2A\\b\text{ odd and squarefree}}}\sup_{\theta\in\mathbb R}S_b(\theta)$$
where 
\begin{align*}
S_b(\theta)=\left|\sum_{T\le t<2T}\left(\frac tb\right)e(\theta t)\right|.\end{align*}
 On squaring and grouping the
terms according to $h=t_1+t_2$, we obtain
\[
 S_b(\theta)^2=\sum_{2T\le h<4T}e(\theta h)U_{b,h},\]
where
 \[U_{b,h}=\sum_{\substack{T\le t<2T\\T\le h-t<2T}}
                  \left(\frac{t(h-t)}b\right).
\]
Cauchy and the triangle inequality give, uniformly in $\theta$,
\begin{equation}\label{eq:fourth-cauchy}
 |S_b(\theta)|^4\ll T\sum_{2T\le h<4T}|U_{b,h}|^2.
\end{equation}
Thus by H\"{o}lder
\begin{align*}
 S^4=\left(\sum_{\substack{A\le b<2A\\b\text{ odd and squarefree}}}\sup_\theta|S_b(\theta)|\right)^4
 &\ll A^3\sum_{\substack{A\le b<2A\\b\text{ odd and squarefree}}}\sup_\theta|S_b(\theta)|^4 \\ 
&\ll A^3T\sum_{2T\le h<4T}\sum_{\substack{A\le b<2A\\b\text{ odd and squarefree}}}|U_{b,h}|^2.
\end{align*}

We use the following version of Heath--Brown's quadratic
large sieve \cite[Corollary~2]{HBsieve}:
\begin{equation}\label{eq:quadratic-large-sieve}
 \sum_{\substack{A\le b<2A\\b\text{ odd and squarefree}}}
 \left|\sum_{n\le X}a_n\left(\frac nb\right)\right|^2
 \ll_\varepsilon (AX)^\varepsilon(A+X)
       \sum_{\substack{n_1,n_2\le X\\n_1n_2=\square}}|a_{n_1}a_{n_2}|.
\end{equation}
The above implies that 
\begin{align*}
&\sum_{\substack{A\le b<2A\\b\text{ odd and squarefree}}}|U_{b,h}|^2 \\ &\ll  (A+T^2)\#\{(t_1,t_2): T\le t_i<2T,
T\le h-t_i<2T,\ t_1(h-t_1)t_2(h-t_2)=\square\}.
\end{align*}
Thus, it is enough to show uniformly for $2T\le h<4T$ that 
\[\#\{(t_1,t_2): T\le t_i<2T,\ T\le h-t_i<2T,\ t_1(h-t_1)t_2(h-t_2)=\square\}\ll T^{1+o(1)}.\]

For each positive squarefree integer $\delta$, let $A_\delta$ count
the integers $t$ satisfying $T\le t<2T$ and $T\le h-t<2T$ for which
$t(h-t)=\delta v^2$ for some $v\ge1$. Each
positive integer $t(h-t)$ has a unique squarefree kernel, so every admissible
$t$ contributes to exactly one $A_\delta$ and 
$$\sum_\delta A_\delta\ll T.$$
The product of two such values is a square precisely when their squarefree
kernels agree. Hence the pair count above is
\[
\#\{(t_1,t_2): T\le t_i<2T,\ T\le h-t_i<2T,\ t_1(h-t_1)t_2(h-t_2)=\square\}\ll \sum_\delta A_\delta^2.
\]
Moreover, if $t$ is counted by $A_\delta$ then
\[
 (2t-h)^2+4\delta v^2=h^2.
\]
Here $\delta\ll T^2$, $h\ll T$, and $|2t-h|,v\ll T$.
The discriminant of this binary quadratic equation is $-16\delta<0$.
By a result of Cilleruelo and Garaev
\cite[Proposition~1]{CillGaraev}, we get
$A_\delta\ll_\varepsilon T^\varepsilon$; 
Thus
\[
 \sum_\delta A_\delta^2
 \le \max_\delta A_\delta\sum_\delta A_\delta
 \ll_\varepsilon T^{1+\varepsilon}.
\]
It follows from \eqref{eq:quadratic-large-sieve} that
\[
 \sum_b|U_{b,h}|^2
 \ll_\varepsilon (AT)^\varepsilon(A+T^2)T.
\]
Summing over the $O(T)$ choices of $h$ in \eqref{eq:fourth-cauchy},
and then using H\"older's inequality in $b$, gives
\[
 \left(\sum_b\sup_\theta|S_b(\theta)|\right)^4
 \ll A^3\sum_b\sup_\theta|S_b(\theta)|^4
 \ll_\varepsilon (AT)^\varepsilon(A^4T^3+A^3T^5).
\]
This completes the proof.
\end{proof}

\begin{proposition}\label{prop:fourth}
For the sum in \eqref{eq:normalized-sum},
\begin{equation}\label{eq:fourth-bound}
 S_\nu(N;L,J)\ll_\varepsilon R^\varepsilon
                   \left((JL)^{1/4}+J^{1/2}L^{-1/4}\right),
\end{equation}
uniformly when $N,L,J\ge1$ are bounded by fixed powers of $R$ and
the weights satisfy the support and derivative bounds stated above.
\end{proposition}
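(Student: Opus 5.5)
The plan is to apply Poisson summation in the short $n$-sum, which turns it into a dual sum of length about $J/L$ twisted by a real character. Lemma~\ref{lem:fourth-moment} then bounds the resulting bilinear sum, with the additive phase coming from $N$ absorbed by the supremum over $\theta$. As a consistency check, if $m$ runs over odd squarefree $b\sim J$ and the dual sum has length $T\asymp J/L$, then the Poisson normalisation $L/\sqrt b$ and Lemma~\ref{lem:fourth-moment} with $A=J$ give
\[
 \frac1J\cdot\frac{L}{\sqrt J}\bigl(J(J/L)^{3/4}+J^{3/4}(J/L)^{5/4}\bigr)=(JL)^{1/4}+J^{1/2}L^{-1/4},
\]
which is exactly \eqref{eq:fourth-bound}. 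So the work is to reduce rigorously to this model case.

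\textbf{Arithmetic reduction.} Since $-4n\equiv0\pmod 4$, the Kronecker symbol $(-4n/m)$ vanishes for even $m$, so only odd $m$ occur. For these,
\[
 \left(\frac{-4n}{m}\right)=\left(\frac{-1}{m}\right)\left(\frac nm\right).
\]
Write $m=bk^2$ with $b$ squarefree; $b>1$ because $m\neq\square$. Then $(n/m)=(n/b)\mathbf 1_{(n,k)=1}$. Remove the coprimality condition by M\"obius inversion, $\sum_{e\mid (n,k)}\mu(e)$, and substitute $n=et$. The condition $n\equiv\nu\pmod 8$ becomes a congruence for $t$ modulo $8$, since $e$ is odd. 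Detect it with additive characters $e(ct/8)$, which can later be absorbed into $\theta$. For fixed $k,e$ the inner sum is then
\[
 \sum_t\Phi\Bigl(\frac{et-N}{L}\Bigr)\left(\frac tb\right)e(ct/8),
\]
a smooth sum of length about $L/e$ against a primitive real character modulo $b$.

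\textbf{Poisson step.} If $L/e\ge b$, I bound the sum directly by P\'olya--Vinogradov, or equivalently by the same Poisson formula, where only $h\ll 1$ survives. Otherwise I apply Poisson summation modulo $8b$ and evaluate the Gauss sum of $(\cdot/b)$, which has modulus $\sqrt b$. The $h=0$ term vanishes because the character is nonprincipal. This leaves
\[
 \frac{L}{e\sqrt b}\sum_{h\ne0}\left(\frac hb\right)e\Bigl(\frac{hN\bar e}{b}+\dots\Bigr)\widehat\Phi\Bigl(\frac{hL}{8eb}\Bigr).
\]
By the derivative bounds on $\Phi$ the sum is effectively truncated at $|h|\le R^\varepsilon eb/L$. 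I split $h$ dyadically into ranges $h\sim T$ and remove the smooth factor $\widehat\Phi$ by partial summation (or Mellin inversion), at the cost of a factor $R^\varepsilon$. The phase $e(hN\bar e/b+\cdots)$ is, for each fixed $b$, of the form $e(\theta h)$ with $\theta$ depending on $b$. This is exactly why Lemma~\ref{lem:fourth-moment} carries $\sup_\theta$ inside the $b$-sum. I expect this Poisson and Gauss-sum bookkeeping to be the main technical obstacle: the modulus is $8b$ rather than $b$, $\bar e$ must be handled, and sign or mod-$8$ twists appear in the Gauss sum. None of these should affect the absolute bounds, since all such factors depend only on $b$ and residue classes, and the lemma's $\sup_\theta$ absorbs the additive ones.

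\textbf{Assembly.} For fixed $k$ and $e$, sum over squarefree odd $b\sim J/k^2$, taking absolute values of the $b$-dependent unimodular factors, and over dyadic $T\ll R^\varepsilon eJ/(k^2L)$. Lemma~\ref{lem:fourth-moment} with $A=J/k^2$ bounds the contribution by
\[
 R^\varepsilon\frac1J\cdot\frac{L}{e\sqrt{J}/k}\Bigl(\frac J{k^2}T^{3/4}+\Bigl(\frac J{k^2}\Bigr)^{3/4}T^{5/4}\Bigr).
\]
Inserting $T\ll eJ/(k^2L)$ gives at most
\[
 R^\varepsilon\bigl(e^{-1/4}k^{-3/2}(JL)^{1/4}+e^{1/4}k^{-2}J^{1/2}L^{-1/4}\bigr).
\]
Since $e\le k$, this is at most $R^\varepsilon k^{-3/2}\bigl((JL)^{1/4}+J^{1/2}L^{-1/4}\bigr)$, and $\sum_k\sum_{e\mid k}k^{-3/2}$ converges. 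The range $L/e\ge b$ contributes at most $\frac1J\sum_{m\sim J}\sqrt m\,R^\varepsilon\ll J^{1/2}R^\varepsilon$, and there $J\ll L$, so $J^{1/2}\le(JL)^{1/4}$. The dyadic and logarithmic losses are $R^\varepsilon$ because $N,L,J$ are polynomial in $R$. This yields \eqref{eq:fourth-bound}.
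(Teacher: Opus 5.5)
Your outline follows the paper's proof: restrict to odd $m=bk^2$, remove $(n,k)=1$ by M\"obius inversion over $e\mid k$, apply Poisson summation modulo $8b$ and evaluate $\tau_b$, split the dual variable dyadically, strip off $\widehat\Phi$, apply Lemma~\ref{lem:fourth-moment} with the $b$-dependent linear phases absorbed by $\sup_\theta$, and sum over $k$ and $e\mid k$. Your final powers of $k$ are miscomputed: the substitution actually gives $e^{-1/4}k^{-5/2}$ and $e^{1/4}k^{-3}$, as in the paper. The weaker powers you state still sum, so this is harmless.

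\textbf{The range $L/e\ge b$.} Here you claim $J\ll L$, and that is wrong as written. Only $b\le L/e$ is known, while $J\asymp bk^2$. For large $k$ (e.g.\ $b=3$, $e=1$, $k\asymp\sqrt J$) one has $J\gg L$. Then your total bound $R^\varepsilon J^{1/2}$ for this range is not dominated by the right side of \eqref{eq:fourth-bound} once $1\ll L\ll J$. The fix is to keep track of $k$. For fixed $k,e$, P\'olya--Vinogradov gives a contribution $\ll R^\varepsilon J^{-1}(J/k^2)\min(J/k^2,L/e)^{1/2}\le R^\varepsilon (JL)^{1/4}k^{-5/2}e^{-1/4}$, which sums over $k$ and $e\mid k$. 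Alternatively, drop the case split as the paper does. For $T_0=eb/L<1$ the Poisson side is $\ll \frac{L}{e\sqrt b}T_0^{B}\le\frac{L}{e\sqrt b}T_0^{3/4}$, so the same final expression covers this range too.

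\textbf{Two smaller inaccuracies.}
\begin{enumerate}
\item The dual phase is the archimedean $e(-Nh/(8eb))$, together with $e(br_0h/8)$, not $e(hN\bar e/b)$; $N=Q^2$ need not be an integer. Only its linearity in $h$ matters, so this does no harm.
\item Mellin inversion would not remove $\widehat\Phi$, because the resulting $h^{-s}$ twist is not absorbed by $\sup_\theta$. Partial summation does work, but it produces $b$-dependent initial segments of $[T,2T)$, which Lemma~\ref{lem:fourth-moment} does not cover directly. You need the standard completion through $\sup_\theta$, at a cost of $\log T$. The paper avoids this by Fourier-inverting $\chi(u)\widehat\Phi(\pm LTu/(8eb))$, which yields the additive phase $e(vt/T)$ at once.
\end{enumerate}
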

\begin{proof}
Recall that 
$$
 S_\nu(N;L,J)=\frac1J\sum_{m\ne\square}W(m/J)
       \sum_{\substack{n\ge1\\n\equiv\nu\;(8)}}
       \Phi\left(\frac{n-N}{L}\right)\left(\frac{-4n}{m}\right).
$$
The argument is to apply Poission summation in $n$ then Lemma~\ref{lem:fourth-moment}.

 Only odd $m$ contribute. Write $m=bc^2$, where $b>1$ is odd and
squarefree and $c$ is odd. The identity
\[
 \left(\frac{-4n}{bc^2}\right)
 =\left(\frac{-4n}{b}\right)1_{(n,c)=1}
\]
and M\"obius inversion reduce the $n$-sum to terms with $e\mid c$,
$(e,b)=1$, and $n=er$. Terms with $(e,b)>1$ vanish. For such a pair
$c,e$, the sum over $r$ is
\[
 \sum_{r\equiv r_0\;(8)}\left(\frac rb\right)
                 \Phi\left(\frac{er-N}{L}\right),
 \qquad r_0\equiv\nu e^{-1}\pmod8.
\]
The sum can be taken over all integers because the support of $\Phi$
forces $er>N>0$. Let
$\tau_b=\sum_{v\bmod b}(v/b)e(v/b)$ be the quadratic Gauss sum.
Poisson summation modulo $8b$ gives
\begin{equation}\label{eq:poisson-character}
 \begin{split}
 &\sum_{r\equiv r_0\;(8)}\left(\frac rb\right)
                 \Phi\left(\frac{er-N}{L}\right)\\
 &\qquad=\frac{L\tau_b}{8eb}\left(\frac8b\right)
     \sum_{t\in\mathbb Z}\left(\frac tb\right)
       e\left(\frac{br_0t}{8}-\frac{Nt}{8eb}\right)
       \widehat\Phi\left(\frac{Lt}{8eb}\right).
 \end{split}
\end{equation}
For completeness, the finite Fourier sum here is
\[
 \sum_{\substack{a\bmod8b\\a\equiv r_0\;(8)}}
          \left(\frac ab\right)e\left(\frac{ta}{8b}\right)
 =\left(\frac8b\right)e\left(\frac{br_0t}{8}\right)
       \tau_b\left(\frac tb\right).
\]
This follows by writing $a=r_0+8u$ and then changing from $u$ to
$a\bmod b$. We used $b^{-1}\equiv b\pmod8$. Since $b>1$, the term
$t=0$ is zero; also $|\tau_b|=\sqrt b$.

Let $C_{c,e}$ denote the contribution for this fixed $c,e$ to
$S_\nu(N;L,J)$. Substituting \eqref{eq:poisson-character} gives
\begin{equation}\label{eq:Cce}
 \begin{split}
 C_{c,e}={}&\frac{L\mu(e)}{8Je}
 \sum_{\substack{b>1\text{ odd and squarefree}\\(b,e)=1}}
 \frac{W(bc^2/J)\tau_b}{b}
       \left(\frac{-4e}{b}\right)\left(\frac8b\right)\\
 &\qquad\times\sum_{t\ne0}\left(\frac tb\right)
 e\left(\frac{br_0t}{8}-\frac{Nt}{8eb}\right)
 \widehat\Phi\left(\frac{Lt}{8eb}\right).
 \end{split}
\end{equation}
Put $A=J/c^2$ and $T_0=Ae/L$. The support of $W$ implies
$A/2<b<2A$, which is covered by a fixed number of intervals of the
form used in Lemma~\ref{lem:fourth-moment}. Nonempty sums have
$A\gg1$. Splitting positive and negative $t$ and replacing $t$ by
$-t$ in the latter reduces the estimation to positive $t$; the extra
factor $(-1/b)$ has absolute value one.

Divide these positive frequencies into intervals $T\le t<2T$, where
$T=1,2,4,\ldots$. We explain how the smooth Fourier weight can be
removed uniformly in $b$. Choose a fixed smooth function $\chi$
supported in $(1/2,3)$ and equal to one on $[1,2]$. For either sign,
put
\[
 g_b(u)=\chi(u)\widehat\Phi\left(\frac{\pm LTu}{8eb}\right).
\]
Rapid decay of $\widehat\Phi$ and its derivatives, followed by two
integrations by parts in its Fourier transform, gives, for every $B>0$,
\begin{equation}\label{eq:weight-envelope}
 |\widehat g_b(v)|\ll_B
       (1+T/T_0)^{-B}(1+|v|)^{-2}.
\end{equation}
The bound is uniform in $b$. Indeed, on the support of $\chi$ the
argument of $\widehat\Phi$ has size comparable with $T/T_0$; the
factors introduced by differentiating it are absorbed by choosing
sufficiently many powers in its rapid-decay bound.

Fourier inversion expresses $g_b(t/T)$ as an integral of
$e(vt/T)\widehat g_b(v)$. All the other factors depending on $t$ in
\eqref{eq:Cce} form an additive phase. The integrable bound
\eqref{eq:weight-envelope} and Lemma~\ref{lem:fourth-moment} therefore
imply
\begin{equation}\label{eq:dyadic-Cce}
 |C_{c,e}|\ll_\varepsilon
 \frac{L}{Je\sqrt A}
 \sum_{T=1,2,4,\ldots}(1+T/T_0)^{-B}
 (AT)^\varepsilon\left(AT^{3/4}+A^{3/4}T^{5/4}\right).
\end{equation}
No common additive phase for different $b$ is required, because the
supremum in Lemma~\ref{lem:fourth-moment} is taken separately for
each modulus.

For $a>0$ and $B>a$,
\[
 \sum_{T=1,2,4,\ldots}T^a(1+T/T_0)^{-B}\ll_{a,B}T_0^a
 \qquad(T_0>0).
\]
When $T_0\ge1$, split at $T=T_0$ and sum two geometric series.
When $T_0<1$, the left side is $O(T_0^B)\le O(T_0^a)$.
Thus the same calculation handles both short dual sums and their tails.
Taking $B$ sufficiently large in \eqref{eq:dyadic-Cce} gives
\[
 |C_{c,e}|\ll_\varepsilon R^\varepsilon
 \frac{L}{Je\sqrt A}
       \left(AT_0^{3/4}+A^{3/4}T_0^{5/4}\right).
\]
Here the small powers of $A$ and $T_0$ have been absorbed into
$R^\varepsilon$; if $T_0<1$, they only improve the estimate.
Substituting $A=J/c^2$ and $T_0=Ae/L$ yields
\[
 |C_{c,e}|\ll_\varepsilon R^\varepsilon
 \left((JL)^{1/4}c^{-5/2}e^{-1/4}
       +J^{1/2}L^{-1/4}c^{-3}e^{1/4}\right).
\]
Both sums over $c$ and $e\mid c$ converge, since
\[
 \sum_{e\mid c}e^{-1/4}\le\tau(c),\qquad
 \sum_{e\mid c}e^{1/4}\le c^{1/4}\tau(c).
\]
Summing proves \eqref{eq:fourth-bound}.
\end{proof}

\subsection{The Petrow--Young estimate}
\begin{lemma}\label{lem:PY}
For $n,J\ge1$ and the smooth weights $W$ considered above,
\[
 \sum_{m\ne\square}\chi_n(m)W(m/J)
       \ll_\varepsilon J^{1/2}n^{1/6+\varepsilon}.
\]
\end{lemma}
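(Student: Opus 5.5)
The plan is to express the smoothed character sum through the $L$-function of the primitive character inducing $\chi_n$ via Mellin inversion, and then apply the Conrey--Iwaniec Weyl-strength subconvexity bound in the form given by Petrow and Young. First we deal with the squares. The sum over $m\ne\square$ differs from the full sum $\sum_m\chi_n(m)W(m/J)$ only by the terms $m=a^2$. Each such term has absolute value at most $|W(a^2/J)|\ll1$, and the support of $W$ restricts to $a\ll J^{1/2}$. So removing the condition $m\ne\square$ costs $O(J^{1/2})$, which is within the bound. It therefore suffices to bound $\sum_m\chi_n(m)W(m/J)$.

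Next write $\chi_n=\chi^*\chi_0$, where $\chi^*$ is the primitive real character of conductor $q\mid 4n$ (so $q\ll n$), and $\chi_0$ is principal modulo $4n$. Mellin inversion gives
\[
\sum_m\chi_n(m)W(m/J)=\frac1{2\pi i}\int_{(2)}\widetilde W(s)J^sL(s,\chi^*)\prod_{p\mid 4n}\bigl(1-\chi^*(p)p^{-s}\bigr)\,ds .
\]
Here $\widetilde W$ is entire, since $W$ is compactly supported in $(0,\infty)$. Because $W^{(r)}\ll_r1$ with fixed support, $\widetilde W(s)\ll_A(1+|s|)^{-A}$ on vertical strips, uniformly over the family of weights. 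Since $\chi^*$ is nonprincipal, $L(s,\chi^*)$ is entire, so we may shift the contour to $\operatorname{Re}s=1/2$ without passing any poles. On that line the Euler factor is bounded by $\prod_{p\mid4n}(1+p^{-1/2})\ll_\varepsilon n^\varepsilon$.

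The main input is the Petrow--Young bound for quadratic characters,
\[
L(1/2+it,\chi^*)\ll_\varepsilon \bigl(q(1+|t|)\bigr)^{1/6+\varepsilon}.
\]
The polynomial growth in $t$ is absorbed by the rapid decay of $\widetilde W$. The integral is then $\ll_\varepsilon J^{1/2}q^{1/6+\varepsilon}n^\varepsilon\ll J^{1/2}n^{1/6+2\varepsilon}$, which gives the lemma after renaming $\varepsilon$.

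The step needing the most care is quoting the subconvexity bound in a form that is uniform in $t$ and holds for every real primitive character, including those with even conductor coming from $-4n$. Conrey--Iwaniec treat odd squarefree moduli at $t$ fixed. Petrow--Young's hybrid version covers all $t$ with polynomial dependence and, I believe, general conductors; if needed, one reduces an even conductor by splitting off the $2$-part, or cites their hybrid statement directly. Any polynomial dependence on $|t|$ is harmless here.
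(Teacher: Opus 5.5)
Your proposal is correct and is essentially the paper's proof: you remove the squares at cost $O(J^{1/2})$, pass to the primitive inducing character with the extra Euler factors bounded by $\prod_{p\mid 4n}(1+p^{-1/2})\ll_\varepsilon n^\varepsilon$, use Mellin inversion with a pole-free shift to $\operatorname{Re}s=1/2$, and apply the Petrow--Young Weyl bound, with the rapid decay of $\widetilde W$ absorbing the growth in $t$. On the point you flag: conductors $8m'$ with $m'$ odd and squarefree do occur here (they come from $n\equiv2\pmod4$) and are not cube-free, and one cannot ``split off the $2$-part'' of a primitive $L$-function, so the right fix is to cite directly the version of the Weyl bound that holds for all primitive characters, namely Petrow and Young's later extension removing the cube-free hypothesis.
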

\begin{proof}
The character $\chi_n$ is induced by a primitive quadratic character
$\chi_n^*$ of conductor at most $4n$. More explicitly, writing
$-4n=Dv^2$ with $D<0$ a fundamental discriminant, the inducing
character is $(D/\cdot)$. In particular it is nonprincipal.
The omitted Euler factors satisfy
\[
 L(s,\chi_n)=L(s,\chi_n^*)
       \prod_{\substack{p\mid4n\\p\nmid |D|}}(1-\chi_n^*(p)p^{-s}),
 \qquad
 \prod_{p\mid4n}(1+p^{-1/2})\ll_\varepsilon n^\varepsilon.
\]
Petrow and Young's Weyl bound \cite[Theorem~1.1]{PetrowYoung} therefore
gives
\[
 L(1/2+it,\chi_n)\ll_\varepsilon
       n^{1/6+\varepsilon}(1+|t|)^{1/6+\varepsilon}.
\]
Let $\widetilde W(s)=\int_0^\infty W(x)x^{s-1}\,dx$ be the Mellin
transform. Its rapid decay on vertical lines is uniform in $W$.
Mellin inversion and a shift from $\Re s=2$ to $\Re s=1/2$ give
\[
 \sum_{m\ge1}\chi_n(m)W(m/J)
 =\frac1{2\pi i}\int_{(1/2)}L(s,\chi_n)J^s\widetilde W(s)\,ds
 \ll_\varepsilon J^{1/2}n^{1/6+\varepsilon}.
\]
There is no pole to cross. Finally, the terms with $m=a^2$ contribute
$O(J^{1/2})$, since $W$ is bounded and supported where $a^2$ is
comparable with $J$. Removing them proves the lemma.
\end{proof}
For $N=Q^2$ in \eqref{eq:normalized-sum}, the $n$-sum has $O(L)$
terms and $n\asymp Q^2$. Thus Lemma~\ref{lem:PY} gives
\begin{equation}\label{eq:PY-bound}
 S_\nu(Q^2;L,J)\ll_\varepsilon R^\varepsilon Q^{1/3}LJ^{-1/2}.
\end{equation}

\subsection{Combining the estimates}
For nonnegative $A,B,C$ and $0\le u,v\le1$,
\[
 \min(A+B,C)\le\min(A,C)+\min(B,C)
                      \le A^uC^{1-u}+B^vC^{1-v}.
\]
Apply this to \eqref{eq:fourth-bound} and \eqref{eq:PY-bound}, with
\[
 A=(JL)^{1/4},\qquad B=J^{1/2}L^{-1/4},\qquad
 C=Q^{1/3}LJ^{-1/2},
\]
and take $u=2/3$ and $v=1/2$.
The powers of $J$ cancel in both terms. Using
\eqref{eq:normalization}, we obtain the bound valid for every $J$:
\begin{equation}\label{eq:combined}
 E_\nu(d;L,J)\ll_\varepsilon R^\varepsilon
       \left(Q^{10/9}L^{1/2}+Q^{7/6}L^{3/8}\right),
 \qquad Q=R/d.
\end{equation}
Summing over $J$ costs at most $O(\log(2R))$. Both powers of $L$ are
positive, so the dyadic $L$-sum is bounded by its largest scale,
$L\ll RH/d^2$. Consequently \eqref{eq:E-blocks} gives
\[
 \begin{split}
 E(R)\ll_\varepsilon R^\varepsilon\bigg(&R
 +R^{10/9}(RH)^{1/2}\sum_{d\ge1}d^{-19/9}\\
 &+R^{7/6}(RH)^{3/8}\sum_{d\ge1}d^{-23/12}\bigg).
 \end{split}
\]
Both sums over $d$ converge. With $H=R^{-13/21}$, the two remaining
powers of $R$ are $R^{82/63}$ and $R^{55/42}$, respectively. Since
$82/63<55/42$, we have
\[
 E(R)\ll_\varepsilon R^{55/42+\varepsilon}.
\]
Together with \eqref{eq:boundary-split} and
\eqref{eq:squares-evaluated}, this proves
Proposition~\ref{prop:arithmetic}.

\section{The analytic estimate}\label{sec:analytic}
\subsection{Poisson summation}
Since $f$ is constant near zero, the function
$\mathbf x\mapsto f(\|\mathbf x\|)$ is smooth and compactly supported
on $\mathbb R^3$. Its Fourier transform at a vector of length
$\xi>0$ is
\[
 4\pi\int_0^\infty r^2f(r)
       \frac{\sin(2\pi r\xi)}{2\pi r\xi}\,dr.
\]
Poisson summation, grouping the nonzero dual vectors according to their
squared length, therefore gives
\begin{equation}\label{eq:radial-poisson}
 S^{(H)}(R)=4\pi\int_0^\infty r^2f(r)\,dr
       +2\sum_{n\ge1}\frac{r_3(n)}{\sqrt n}\mathcal I(\sqrt n),
\end{equation}
where
\[
 \mathcal I(\xi)=\int_0^\infty rf(r)\sin(2\pi r\xi)\,dr.
\]
We estimate the series by smooth dyadic sums of
$r_3(n)e(\rho\sqrt n)$, first recording the exponential-sum bounds.

\subsection{The exponential-sum estimate}
For a real smooth weight $v$ supported in $(1/2,2)$, put
\[
 V_N(\rho;v)=\sum_{n\ge1}r_3(n)v(n/N)e(\rho\sqrt n).
\]
The estimates in this subsection are uniform when each derivative of
$v$ is bounded. In ranges written $x\asymp X$, the variable lies in
an interval whose endpoints are comparable with $X$; all estimates
also hold for subintervals of those ranges.

We use the following formulation of the exponent-pair bound
\cite[Section~5]{Robert}. For a fixed exponent pair $(\kappa,\lambda)$,
there are an integer $q_0$ and a constant $\eta_0>0$ such that a phase
\[
 F(y)=\Lambda y^{2/3}+u(y),\qquad \Lambda>0,
\]
on an interval $I\subseteq[D,2D]$, satisfying
\[
 |u^{(j)}(y)|\le\eta_0\Lambda D^{2/3-j}
       \quad(1\le j\le q_0),
\]
obeys
\begin{equation}\label{eq:exponent-pair-principle}
 \sum_{y\in I\cap\mathbb Z}e(F(y))
 \ll (\Lambda D^{-1/3})^\kappa D^\lambda
                +\frac{D^{1/3}}{\Lambda}.
\end{equation}
We apply this when $\Lambda D^{2/3}\gg D$. This states precisely the
stability under small differentiated errors that is needed below.
The constants may depend on the exponent pair.

\begin{lemma}\label{lem:exponent-pair}
For $\rho\ge2$ and $1\le N\le\rho^2$,
\begin{equation}\label{eq:general-pair}
 V_N(\rho;v)\ll_\varepsilon(\rho N)^\varepsilon
 \left(N^{5/4}
 +\rho^{(1+\kappa)/4}N^{7/8+\lambda/4-\kappa/8}\right).
\end{equation}
\end{lemma}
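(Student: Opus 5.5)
We follow Chamizo--Iwaniec and Heath-Brown: open $r_3(n)$ as a sum over two variables, fix one, and apply the exponent pair in the remaining variable.

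\emph{Setup.} Write
\[
 r_3(n)=\sum_{x_3}r_2(n-x_3^2),
\]
or better group $\mathbf x=(x_1,x_2,x_3)$ directly, so that
\[
 V_N(\rho;v)=\sum_{x_1,x_2}\sum_{x_3}v\left(\frac{\|\mathbf x\|^2}N\right)e(\rho\|\mathbf x\|).
\]
By symmetry and a smooth partition we restrict to the region where $|x_3|$ is the largest coordinate, so that $|x_3|\asymp\sqrt N$. The other regions are handled identically after permuting coordinates. Fix $k=x_1^2+x_2^2\ll N$; there are $N^{1+\varepsilon}$ pairs up to multiplicity, and the multiplicity $r_2(k)\ll k^\varepsilon$. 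The inner sum is then
\[
 \sum_{y}v\big((y+k)/N\big)\,e\big(\rho\sqrt{y+k}\big)\quad\text{over squares } y=x_3^2,
\]
that is, $\sum_{x}e(\rho\sqrt{x^2+k})$ with $x\asymp\sqrt N$. This one-variable phase has derivative about $\rho$ and second derivative about $\rho k/N^{3/2}$. That is awkward, since the phase is degenerate when $k$ is small.

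\emph{Main route.} The presence of $y^{2/3}$ in \eqref{eq:exponent-pair-principle} signals that the intended route instead applies Poisson (B-process) in $x_3$ first. The stationary phase of $e(\rho\sqrt{x^2+k}-hx)$ gives a dual phase of the form $\sqrt{k}\sqrt{\rho^2-h^2}$, and one then sums over the lattice in $(x_1,x_2,h)$.

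Concretely, the plan is as follows. Apply Poisson summation in the whole of $\mathbf x$ after dyadic localization, or equivalently use the Voronoi-type transformation for $r_3$. This converts $V_N(\rho;v)$ into
\[
 (N/\rho)^{?}\sum_m r_3(m)\,\cdots\,e(\cdot)
\]
with dual length $M\asymp \rho^2/N$. Stationary phase turns this into a sum of phases $\sqrt{N}\,\cdot$ (function of $m$). Rather than push this further, I will adopt the Heath-Brown route. Write $n=x_3^2+k$ with $k$ counted by $r_2$, and substitute $y=x_3$, $y\asymp D$. Then apply a Weyl shift or van der Corput A-process in $y$ with shift parameter $H_0$; this produces the term $N^{5/4}$ after optimization, since the trivial count is $N^{3/2}$ and $H_0=N^{1/2}$ gives a $\sqrt{N\cdot N/H_0}$ type saving. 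The differenced phase, after summation over $k$ by Poisson (which is available because $r_2$ is a theta-type coefficient), becomes a monomial $\Lambda y^{2/3}$ plus a lower-order correction $u$. This is where \eqref{eq:exponent-pair-principle} enters, with $\Lambda\approx(\rho\,\cdot)^{\dots}$ determined by stationary phase.

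\emph{Bookkeeping.} Inserting the bound \eqref{eq:exponent-pair-principle} and summing over the outer variables yields the term $\rho^{(1+\kappa)/4}N^{7/8+\lambda/4-\kappa/8}$. The $D^{1/3}/\Lambda$ term is dominated because $\Lambda D^{2/3}\gg D$ in our range $N\le\rho^2$.

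\emph{Main obstacle.} The main obstacle is verifying that the error phase $u$ satisfies the derivative hypotheses uniformly, i.e.\ $|u^{(j)}|\le\eta_0\Lambda D^{2/3-j}$. If this fails for small parameters, we treat those ranges trivially, with that contribution absorbed into $N^{5/4}$.
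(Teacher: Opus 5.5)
Your sketch performs only one Cauchy--Schwarz/differencing step before applying the exponent pair, and the stated bound cannot come out of that. Your ``Bookkeeping'' paragraph asserts the bound without computing it. The exponents $(1+\kappa)/4$ and $7/8+\lambda/4-\kappa/8$ are fourth roots of the inequality \eqref{eq:CI-reduction}. In the Chamizo--Iwaniec reduction the paper imports, that inequality arises as follows.

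\begin{enumerate}
\item Write $n=m^2+l$, confine $m$ to intervals of length $N^{1/2-\delta}$, and apply Cauchy over $l$. This removes the weights $r_2(l)$, so Poisson summation is later applied to a sum free of arithmetic weights, not to an $r_2$-weighted sum over $k$. One reaches $\sum_t e(\phi(t,y))$ with $\phi(t,y)=\rho(\sqrt t-\sqrt{t+y})$, where $y=m_2^2-m_1^2$ has size $D\ll N^{1-\delta}$.
\item Poisson and stationary phase in $t$ then leave, schematically, $\sum_y\nu(y)\sum_{s\asymp U}e(g(s,y))$. Here $U=\rho DN^{-3/2}$ and $\nu(y)$ is a non-smooth representation count, so no exponent pair can be applied in $y$ yet.
\item The step you are missing is a second Cauchy over $y$ (using $\nu(y)\ll N^{\varepsilon}$), followed by opening the square in $s$ with $s_1=x+z$, $s_2=x$, $z\asymp T\le U$.
\item Only then is the exponent pair applied, to $\sum_y e(G(x,y,z))$ with $-G=\int_x^{x+z}\alpha(s,y)\,ds\approx\Lambda_{x,z}y^{2/3}$. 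This phase has size $TN$ instead of the size $UN$ of $g$ itself.
\end{enumerate}

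Suppose instead you apply the exponent pair directly after the first Poisson step, in $x_3$ for a fixed shift $h$, as your sketch suggests. The same bookkeeping then gives only about $\rho^{1/4+\kappa/2}N^{7/8+\lambda/4}$, which exceeds the claimed term by $(\rho^2N)^{\kappa/8}$.

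Your fallback for the derivative hypotheses is also not viable. The bound $|u^{(j)}_{x,z}|\ll_j N^{-\delta}\Lambda_{x,z}D^{2/3-j}$ is proved from the normalized stationary equation \eqref{eq:normalized-stationary}, where $\eta\asymp D/N\ll N^{-\delta}$. The smallness is produced by the short intervals forcing $D\le N^{1-\delta}$. So your shift range must be $N^{1/2-\delta}$ rather than $N^{1/2}$, with the $N^{O(\delta)}$ losses absorbed into $(\rho N)^{\varepsilon}$. When $D\asymp N$, the phase is not a small perturbation of $\Lambda y^{2/3}$, so \eqref{eq:exponent-pair-principle} is unavailable, and a trivial bound there gives only $\rho^{1/4}N^{9/8}$, not $N^{5/4}$.

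Two smaller corrections:
\begin{itemize}
\item The term $N^{5/4}$ comes from the $N^5$ term in \eqref{eq:CI-reduction} and from the range $D\ll N^{3/2}/\rho$ handled by the first-derivative estimate.
\item The condition $\Lambda D^{2/3}\gg D$ holds because $\Lambda D^{2/3}\asymp TN\ge N\gg D$, not because $N\le\rho^2$.
\end{itemize}
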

\begin{proof}
Partial summation removes $v$ at a bounded cost, leaving incomplete
sums with $n$ in an interval contained in $[N/2,2N]$. We use the
reduction in the proof of \cite[Lemma~3.1, through (3.4)]{Cham}, which
is uniform in these endpoints. We describe the resulting sums and
verify the additional derivative information required for
\eqref{eq:exponent-pair-principle}.

Choose a small $\delta>0$ in terms of $\varepsilon$. In that reduction,
a coordinate of length $O(N^{1/2})$ is split into intervals of length
$N^{1/2-\delta}$ before Cauchy's inequality is applied. The resulting
difference of two squared coordinates has size $D\ll N^{1-\delta}$.
The losses $N^{O(\delta)}$, as well as the divisor and logarithmic
factors, are absorbed into $(\rho N)^\varepsilon$. The range
$D\ll N^{3/2}/\rho$ is handled by the first-derivative estimate and
contributes $O_\varepsilon(N^{5/4+\varepsilon})$ to $V_N$.

For the remaining range, the one-dimensional Poisson and
stationary-phase transformation, followed by Cauchy's inequality,
gives parameters
\begin{equation}\label{eq:analytic-parameters}
 N^{3/2}/\rho\ll D\ll N^{1-\delta},\qquad
 U=\rho DN^{-3/2},\qquad 1\le T\le U,
\end{equation}
and the bound
\begin{equation}\label{eq:CI-reduction}
 |V_N(\rho;v)|^4\ll_\varepsilon(\rho N)^\varepsilon
 \left(N^5+\rho^{-1}N^{9/2}
 \sum_{x\asymp U}\sum_{z\asymp T}
       \left|\sum_{y\asymp D}e(G(x,y,z))\right|\right).
\end{equation}
The error $\rho^{-1}DN^{9/2}$ in the cited reduction has been absorbed
into $N^5$, using $D\le N$ and $\rho\ge\sqrt N$.

Here is the phase in \eqref{eq:CI-reduction}. Put
\[
 \phi(t,y)=\rho(\sqrt t-\sqrt{t+y}),
\]
and let $\alpha(s,y)>0$ be its stationary point, defined by
\begin{equation}\label{eq:stationary}
 \phi_t(\alpha,y)=s,
 \quad\text{or equivalently}\quad
 \alpha^{-1/2}-(\alpha+y)^{-1/2}=2s/\rho.
\end{equation}
The left side is strictly decreasing from infinity to zero as
$\alpha$ increases, so there is exactly one positive solution.
Define
\[
 g(s,y)=\phi(\alpha(s,y),y)-s\alpha(s,y),\qquad
 G(x,y,z)=g(x+z,y)-g(x,y).
\]
Differentiating $g$ with respect to $s$ and using
\eqref{eq:stationary} cancels the terms containing $\alpha_s$ and
gives $g_s=-\alpha$. Hence
\begin{equation}\label{eq:G-integral}
 -G(x,y,z)=\int_x^{x+z}\alpha(s,y)\,ds.
\end{equation}

We now estimate $\alpha$ and its derivatives. For fixed $s$, write
\[
 a_0=\left(\frac{\rho y}{4s}\right)^{2/3},\qquad
 \eta=\frac{y}{a_0},\qquad \alpha=a_0A.
\]
Using the exact identity
\[
 \alpha^{-1/2}-(\alpha+y)^{-1/2}
       =\frac y2\int_0^1(\alpha+u y)^{-3/2}\,du,
\]
the stationary equation becomes
\begin{equation}\label{eq:normalized-stationary}
 \int_0^1(A+u\eta)^{-3/2}\,du=1.
\end{equation}
For $s\asymp U$ and $y\asymp D$, we have $a_0\asymp N$ and
$\eta\asymp D/N\ll N^{-\delta}$. We may therefore assume
$0<\eta<1/2$, since bounded $N$ is covered trivially.

The integral in \eqref{eq:normalized-stationary} is decreasing in
$A$. Comparing $A=1-\eta$ and $A=1$ gives
$1-\eta\le A\le1$. Differentiation yields
\[
 A'(\eta)=-\frac{\int_0^1u(A+u\eta)^{-5/2}\,du}
                     {\int_0^1(A+u\eta)^{-5/2}\,du},
\]
so $|A'|\le1$. Repeated differentiation gives
$A^{(j)}(\eta)\ll_j1$ for each fixed $j$: at every step the coefficient
of the highest derivative is a nonzero constant times
$\int_0^1(A+u\eta)^{-5/2}\,du$, which is bounded away from zero,
and all other factors are bounded by induction. These differentiations
can also be justified directly by difference quotients in
\eqref{eq:normalized-stationary}, since its derivative in $A$ is
bounded away from zero.

Since $\eta=(4s/\rho)^{2/3}y^{1/3}$, the chain rule and
$A-1=O(\eta)$ now give, for every fixed $j\ge0$,
\begin{equation}\label{eq:alpha-derivatives}
 \partial_y^j(\alpha-a_0)
 \ll_j\frac DN\left(\frac{\rho}{s}\right)^{2/3}D^{2/3-j}.
\end{equation}
Substituting into \eqref{eq:G-integral}, we obtain
\[
 -G(x,y,z)=\Lambda_{x,z}y^{2/3}+u_{x,z}(y),
 \qquad
 \Lambda_{x,z}=\int_x^{x+z}\left(\frac{\rho}{4s}\right)^{2/3}\,ds,
\]
with
\begin{equation}\label{eq:phase-derivatives}
 |u_{x,z}^{(j)}(y)|\ll_j N^{-\delta}
                      \Lambda_{x,z}D^{2/3-j}.
\end{equation}
The size of the leading phase is particularly simple:
\begin{equation}\label{eq:phase-size}
 \Lambda_{x,z}D^{2/3}
 \asymp T\rho^{2/3}U^{-2/3}D^{2/3}=TN.
\end{equation}
For sufficiently large $N$, \eqref{eq:phase-derivatives} supplies all
the finitely many derivative bounds needed in
\eqref{eq:exponent-pair-principle}. Splitting into a fixed number of
intervals contained in $[D,2D]$, with comparable scales if needed,
we conclude that
\begin{equation}\label{eq:y-sum}
 \left|\sum_{y\asymp D}e(G(x,y,z))\right|
 \ll (TN)^\kappa D^{\lambda-\kappa}+\frac{D}{TN}.
\end{equation}
Replacing $G$ by $-G$ does not change the absolute value.

There are $O(UT)$ pairs $(x,z)$. Since $T\le U$ and $D\le N$,
\[
 \begin{split}
 UT(TN)^\kappa D^{\lambda-\kappa}
 &\le U^{2+\kappa}N^\kappa D^{\lambda-\kappa}
   =\rho^{2+\kappa}D^{2+\lambda}N^{-3-\kappa/2}\\
 &\le\rho^{2+\kappa}N^{-1+\lambda-\kappa/2},
 \end{split}
\]
Also,
\[
 UT\frac{D}{TN}=\rho D^2N^{-5/2}\le\rho N^{-1/2}.
\]
Their contributions to \eqref{eq:CI-reduction} are respectively
$\rho^{1+\kappa}N^{7/2+\lambda-\kappa/2}$ and $O(N^4)$.
Taking fourth roots proves \eqref{eq:general-pair}.
\end{proof}

Bourgain \cite[Theorem~6]{Bourgain} proves that
$(13/84+\eta,55/84+\eta)$ is an exponent pair for every $\eta>0$.
Choosing $\eta$ sufficiently small in Lemma~\ref{lem:exponent-pair}
gives
\begin{equation}\label{eq:Bourgain}
 V_N(\rho;v)\ll_\varepsilon(\rho N)^\varepsilon
       \left(N^{5/4}+\rho^{97/336}N^{685/672}\right)
       \qquad(1\le N\le\rho^2).
\end{equation}
We also use the second-derivative estimate from
\cite[Lemma~3.1]{Cham}. By partial summation it applies to the same
smooth weights and gives, for $\rho\asymp R$,
\begin{equation}\label{eq:CI-second}
 V_N(\rho;v)\ll_\varepsilon(RN)^\varepsilon
       \left(N^{5/4}+R^{3/8}N^{15/16}+R^{1/8}N^{17/16}\right).
\end{equation}
For $1\le N\le R^2$ and $R\le\rho\le R+H$, combining these bounds
yields
\[
 \begin{split}
 \frac{|V_N(\rho;v)|}{N}\ll_\varepsilon R^\varepsilon\big(&N^{1/4}+R^{1/8}N^{1/16}\\
       &+\min\{R^{3/8}N^{-1/16},R^{97/336}N^{13/672}\}\big).
 \end{split}
\]
The minimum is at most the first term to the power $13/55$ times
the second to the power $42/55$. The powers of $N$ cancel, giving
\begin{equation}\label{eq:V-combined}
 \frac{|V_N(\rho;v)|}{N}\ll_\varepsilon R^\varepsilon
       \left(N^{1/4}+R^{1/8}N^{1/16}+R^{17/55}\right).
\end{equation}

\subsection{Summation of the dual series}
Use the smooth dyadic partition $w$ from Section~\ref{sec:arithmetic}
and put
\[
 E_N=2\sum_{n\ge1}\frac{r_3(n)}{\sqrt n}w(n/N)\mathcal I(\sqrt n)
 \qquad(N=1,2,4,\ldots).
\]
By \eqref{eq:radial-poisson}, the analytic error is $\sum_N E_N$.
We derive two bounds for each term and then sum them directly.

Put $F(r)=rf(r)$. Integration by parts gives
\begin{align}
 \mathcal I(\xi)
 &=\frac1{2\pi\xi}\int_R^{R+H}rf'(r)\cos(2\pi r\xi)\,dr\notag\\
 &\hspace{8mm}-\frac1{(2\pi\xi)^2}\int_R^{R+H}f'(r)\sin(2\pi r\xi)\,dr,
 \label{eq:IBP-first}\\
 \mathcal I(\xi)
 &=-\frac1{(2\pi\xi)^2}\int_R^{R+H}F''(r)\sin(2\pi r\xi)\,dr.
 \label{eq:IBP-second}
\end{align}
There is no contribution at the origin: $F(0)=0$ in the first
integration and $\sin(0)=0$ in the second. All the remaining derivatives
are supported in $[R,R+H]$. In particular,
\[
 \|rf'\|_1\ll R,\qquad \|f'\|_1\ll1,
 \qquad \|F''\|_1\ll R/H.
\]

For $a=1,3/2$, write $w_a(x)=x^{-a}w(x)$, extended by zero outside
$x>0$. Then
\[
 \sum_{n\ge1}\frac{r_3(n)}{n^a}w(n/N)e(r\sqrt n)
       =N^{-a}V_N(r;w_a).
\]
Substitution of \eqref{eq:IBP-first} into $E_N$ gives the exact identity
\[
 \begin{split}
 E_N={}&\frac1{\pi N}\int_R^{R+H}rf'(r)
                    \operatorname{Re}V_N(r;w_1)\,dr\\
 &-\frac1{2\pi^2N^{3/2}}\int_R^{R+H}f'(r)
                    \operatorname{Im}V_N(r;w_{3/2})\,dr.
 \end{split}
\]
Using \eqref{eq:IBP-second} instead gives
\[
 E_N=-\frac1{2\pi^2N^{3/2}}\int_R^{R+H}F''(r)
                    \operatorname{Im}V_N(r;w_{3/2})\,dr.
\]
The two resulting bounds combine to
\begin{equation}\label{eq:analytic-block}
 |E_N|\ll\frac RN\min\{1,(H\sqrt N)^{-1}\}
       \sup_{R\le\rho\le R+H}
            \max_{a\in\{1,3/2\}}|V_N(\rho;w_a)|.
\end{equation}

For $N\le R^2$, insert \eqref{eq:V-combined}. If $0<a<1/2$, summing
geometrically on either side of $N=H^{-2}$ gives
\[
 \sum_{N=1,2,4,\ldots}N^a\min\{1,(H\sqrt N)^{-1}\}
       \ll_a H^{-2a}.
\]
For $a=0$ and $N\le R^2$, the corresponding sum is $O(\log(2R))$.
Applying these statements with $a=1/4,1/16,0$ in
\eqref{eq:analytic-block}, we obtain
\begin{equation}\label{eq:analytic-main-range}
 \sum_{N\le R^2}|E_N|\ll_\varepsilon R^\varepsilon
       \left(RH^{-1/2}+R^{9/8}H^{-1/8}+R^{72/55}\right).
\end{equation}

For $N>R^2$, cancellation in $V_N$ is unnecessary. Two more
integrations by parts in \eqref{eq:IBP-second} have no boundary terms,
because $F''$ and its derivatives vanish at both ends of their
support. They give
\[
 |\mathcal I(\xi)|\ll\xi^{-4}\|F^{(4)}\|_1
                 \ll RH^{-3}\xi^{-4}.
\]
The elementary bound
\[
 \sum_{1\le n\le2N}r_3(n)\le(2\sqrt{2N}+1)^3\ll N^{3/2}
\]
therefore implies $|E_N|\ll RH^{-3}N^{-1}$. Summing over the dyadic
scales $N>R^2$ gives
\begin{equation}\label{eq:analytic-tail}
 \sum_{N>R^2}|E_N|\ll R^{-1}H^{-3}.
\end{equation}
This also proves the absolute convergence used in decomposing the
series.

Finally, for $H=R^{-13/21}$ the terms in
\eqref{eq:analytic-main-range} and \eqref{eq:analytic-tail} have sizes
\[
 R^{55/42},\qquad R^{101/84},\qquad R^{72/55},\qquad R^{6/7}.
\]
The last three exponents are smaller than $55/42$. Substituting in
\eqref{eq:radial-poisson} proves Proposition~\ref{prop:analytic}.

\section{Completion of the proof of Theorem \ref{thm:main}}\label{sec:complete}
By \eqref{eq:decomposition} and Propositions~\ref{prop:arithmetic}
and~\ref{prop:analytic},
\[
 \begin{split}
 S(R)&=4\pi\left(\int_0^\infty r^2f(r)\,dr
                    -\int_R^{R+H}r^2f(r)\,dr\right)
                    +O_\varepsilon(R^{55/42+\varepsilon})\\
     &=\frac{4\pi}{3}R^3+O_\varepsilon(R^{55/42+\varepsilon}),
 \end{split}
\]
because $f=1$ on $[0,R]$ and $f=0$ on $[R+H,\infty)$.
This proves Theorem~\ref{thm:main}.

\section{Proof of Theorem \ref{thm:qrh}}\label{sec:qrh-refinement}
We prove the stronger estimate with error exponent
\[
 \Theta=\frac{2274811}{1743872}.
\]
We now assume the zero-free half-plane $\operatorname{Re}s>7/8$ for
Dirichlet $L$-functions from \cite{OpenAIQRH}. The consequence that we
need is a shorter Dirichlet approximation to $L(1,\chi)$. More precisely,
for a nonprincipal quadratic Dirichlet character $\chi$ modulo $q$, the
argument in \cite{OpenAIQRH} gives
\begin{equation}\label{eq:qrh-truncation}
 L(1,\chi)=\sum_{m\ge1}\frac{\chi(m)}m U(m/Y)
 +O_\varepsilon\bigl(q^\varepsilon Y^{-1/8}\bigr),
\end{equation}
where $U$ is a fixed smooth cutoff equal to one on $[0,1]$ and zero on
$[2,\infty)$. We apply this with
\[
 Y\asymp q^\alpha,
 \qquad \alpha=\frac5{13}.
\]
The same estimate applies to the characters $\chi_n$ occurring in
\eqref{eq:class-number}, taking $q=4n$.
For the proof of Theorem~\ref{thm:qrh}, we repeat the smoothing
decomposition with
\begin{equation}\label{eq:H-qrh}
 H=R^{-3/5}.
\end{equation}
All the identities in Section~\ref{sec:decomposition} remain unchanged.
We first reconsider the arithmetic boundary term. For fixed $d$, put
$Q=R/d$ and take $Y_d=(16Q^2)^\alpha\asymp Q^{2\alpha}$.
In the nonsquare contribution and its dyadic decomposition, we replace
$U(m/Z_d)$ by $U(m/Y_d)$. The notation $E(R)$, $E_\nu(d;L,J)$, and
$W$ below refers to the corresponding quantities with this replacement.
Since $n\asymp Q^2$ on the interval in \eqref{eq:boundary-class},
\eqref{eq:qrh-truncation}, after multiplication by $\sqrt n$, gives an
error
\[
 O_\varepsilon\bigl(R^\varepsilon Q^{1-\alpha/4}\bigr)
\]
for each pair $(d,n)$. Using the estimate $O(1+RH/d^2)$ for each $d$
from \eqref{eq:pair-count}, the total truncation error is
\begin{equation}\label{eq:qrh-total-truncation}
 \begin{split}
 &\ll_\varepsilon R^\varepsilon
   \sum_{d\le R+H}\left(1+\frac{RH}{d^2}\right)
                  \left(\frac Rd\right)^{1-\alpha/4}
 \ll_\varepsilon R^{1+\varepsilon}
                  +R^{2-\alpha/4+\varepsilon}H.
 \end{split}
\end{equation}
Here $0<1-\alpha/4<1$. For the chosen $H$ and $\alpha$,
\[
 2-\frac\alpha4-\frac35=\frac{339}{260}<\Theta.
\]
The square terms can again be completed to the full square sum. Indeed,
the omitted square tail contributes at most
\[
 \sqrt n\sum_{a>Y_d^{1/2}}a^{-2}\ll Q^{1-\alpha}
\]
for each pair $(d,n)$, and hence in total
\begin{equation}\label{eq:qrh-square-tail}
 \ll R+R^{2-\alpha}H\ll R^{66/65}.
\end{equation}
Thus the evaluation of $M(R)$ in \eqref{eq:squares-evaluated} is
unchanged.\\
The shortened Dirichlet approximation gives the range
\begin{equation}\label{eq:qrh-J-range}
 J\ll Q^{2\alpha}.
\end{equation}
The normalization \eqref{eq:normalization} and the uniform derivative
bounds for the weights remain valid. In particular,
$W(x)=x^{-1}w(x)U(Jx/Y_d)$ has uniformly bounded derivatives because
$J\ll Y_d$.
We keep both Proposition~\ref{prop:fourth} and the Petrow--Young
estimate \eqref{eq:PY-bound}. For nonnegative $A,B,C$,
\[
 \min(A+B,C)\le A+\min(B,C)\le A+B^{4/5}C^{1/5}.
\]
Apply this with
\[
 A=(JL)^{1/4},\qquad B=J^{1/2}L^{-1/4},\qquad
 C=Q^{1/3}LJ^{-1/2}.
\]
Since
\[
 B^{4/5}C^{1/5}=Q^{1/15}J^{3/10},
\]
we obtain
\begin{equation}\label{eq:qrh-hybrid-bound}
 S_\nu(Q^2;L,J)\ll_\varepsilon R^\varepsilon
 \left((JL)^{1/4}+Q^{1/15}J^{3/10}\right).
\end{equation}
Using \eqref{eq:normalization} and \eqref{eq:qrh-J-range} gives
\[
 E_\nu(d;L,J)\ll_\varepsilon R^\varepsilon
 \left(Q^{1+\alpha/2}L^{1/4}
       +Q^{16/15+3\alpha/5}\right).
\]
We sum over the dyadic parameters as in \eqref{eq:E-blocks}, using
$L\ll RH/d^2$. The first term is summed geometrically in $L$.
The second, being independent of $L$, costs only a logarithmic factor.
The dyadic $J$-sum also costs at most a logarithmic factor. The sums
in $d$ are bounded by
\[
 \sum_{d\ge1}d^{-3/2-\alpha/2}
 \quad\text{and}\quad
 \sum_{d\ge1}d^{-16/15-3\alpha/5},
\]
both of which converge. Absorbing the logarithmic factors into
$R^\varepsilon$, we obtain
\begin{equation}\label{eq:qrh-arithmetic-error}
 E(R)\ll_\varepsilon R^\varepsilon\left(
 R+R^{5/4+\alpha/2}H^{1/4}
   +R^{16/15+3\alpha/5}\right).
\end{equation}
For $H=R^{-3/5}$ and $\alpha=5/13$, the two nontrivial exponents are
\[
 \frac54+\frac\alpha2-\frac3{20}=\frac{84}{65},
 \qquad
 \frac{16}{15}+\frac{3\alpha}{5}=\frac{253}{195}.
\]
Both are smaller than $339/260$. Together with
\eqref{eq:qrh-total-truncation}, \eqref{eq:qrh-square-tail}, and
\eqref{eq:squares-evaluated}, this proves
\begin{equation}\label{eq:qrh-boundary}
 B^{(H)}(R)=4\pi\int_R^{R+H}r^2f(r)\,dr
 +O_\varepsilon(R^{339/260+\varepsilon}).
\end{equation}
It remains to improve the analytic estimate. For every sufficiently
small $\eta>0$, Trudgian and Yang \cite[Lemma~1.1]{TrudgianYang}
prove that $P_1+(\eta,\eta)$ and $P_2+(\eta,\eta)$ are exponent
pairs, where
\[
 P_1=\left(\frac{4742}{38463},\frac{35731}{51284}\right),
 \qquad
 P_2=\left(\frac{18}{199},\frac{593}{796}\right).
\]
The van der Corput $B$-process is
$B(k,\ell)=(\ell-1/2,k+1/2)$, so
\[
 B(P_1)=\left(\frac{10089}{51284},\frac{47947}{76926}\right),
 \qquad
 B(P_2)=\left(\frac{195}{796},\frac{235}{398}\right).
\]
Put
\[
 u=\frac{1961613}{3487744},
 \qquad 1-u=\frac{1526131}{3487744}.
\]
By the $B$-process and convexity of exponent pairs,
$(\kappa_*+\eta,\lambda_*+\eta)$ is an exponent pair, where
\begin{equation}\label{eq:qrh-exponent-pair}
 \begin{split}
 (\kappa_*,\lambda_*)&=uB(P_1)+(1-u)B(P_2)
 =\left(\frac{94971}{435968},\frac{530939}{871936}\right).
 \end{split}
\end{equation}
The relevant identities are
\begin{equation}\label{eq:qrh-pair-identities}
 2\lambda_*-\kappa_*=1,
 \qquad
 p:=\frac{1+\kappa_*}{4}
   =\frac{530939}{1743872}=\Theta-1.
\end{equation}
In particular,
\[
 \frac78+\frac{\lambda_*+\eta}{4}
          -\frac{\kappa_*+\eta}{8}
 =1+\frac\eta8.
\]
Applying Lemma~\ref{lem:exponent-pair} with this exponent pair gives,
for $\rho\ge2$ and $1\le N\le\rho^2$,
\[
 V_N(\rho;v)\ll_{\varepsilon_1,\eta}(\rho N)^{\varepsilon_1}
 \left(N^{5/4}+\rho^{p+\eta/4}N^{1+\eta/8}\right).
\]
The differentiated error estimate \eqref{eq:phase-derivatives} supplies
the finitely many derivative bounds required for this fixed pair,
exactly as in the proof of Lemma~\ref{lem:exponent-pair}.
Choose $\varepsilon_1,\eta>0$ sufficiently small in terms of
$\varepsilon$. Since $\rho\asymp R$ and $N\le R^2$, the extra
factors $\rho^{\eta/4}N^{\eta/8}$ are absorbed into $R^\varepsilon$.
Thus, uniformly for $R\le\rho\le R+H$ and $1\le N\le R^2$,
\begin{equation}\label{eq:qrh-V-bound}
 \frac{|V_N(\rho;v)|}{N}\ll_\varepsilon R^\varepsilon
 \left(N^{1/4}+R^p\right).
\end{equation}
Insert \eqref{eq:qrh-V-bound} into \eqref{eq:analytic-block}.
The dyadic summation used in Section~\ref{sec:analytic} gives
\[
 \sum_{N=1,2,4,\ldots}N^{1/4}
       \min\{1,(H\sqrt N)^{-1}\}\ll H^{-1/2},
\]
while
\[
 \sum_{\substack{N=1,2,4,\ldots\\N\le R^2}}
       \min\{1,(H\sqrt N)^{-1}\}\ll\log(2R).
\]
Consequently, after absorbing the logarithmic factor into
$R^\varepsilon$,
\begin{equation}\label{eq:qrh-analytic-main-range}
 \sum_{N\le R^2}|E_N|\ll_\varepsilon R^\varepsilon
 \left(RH^{-1/2}+R^{1+p}\right).
\end{equation}
The tail estimate \eqref{eq:analytic-tail} is unchanged. Hence
\[
 \sum_N|E_N|\ll_\varepsilon R^\varepsilon
 \left(RH^{-1/2}+R^\Theta+R^{-1}H^{-3}\right).
\]
With $H=R^{-3/5}$, the first and third terms are respectively
$R^{13/10}$ and $R^{4/5}$, both smaller than $R^\Theta$.
It follows from \eqref{eq:radial-poisson} that
\begin{equation}\label{eq:qrh-analytic}
 S^{(H)}(R)=4\pi\int_0^\infty r^2f(r)\,dr
 +O_\varepsilon(R^{\Theta+\varepsilon}).
\end{equation}
Finally, $339/260<\Theta$, so \eqref{eq:decomposition},
\eqref{eq:qrh-boundary}, and \eqref{eq:qrh-analytic} give
\[
 \begin{split}
 S(R)&=4\pi\left(\int_0^\infty r^2f(r)\,dr
                    -\int_R^{R+H}r^2f(r)\,dr\right)
                    +O_\varepsilon(R^{\Theta+\varepsilon})\\
     &=\frac{4\pi}{3}R^3
                    +O_\varepsilon(R^{2274811/1743872+\varepsilon}).
 \end{split}
\]
Here the cancellation of the two main integrals is exact, as in
Section~\ref{sec:complete}. This proves Theorem~\ref{thm:qrh}.

\section*{Acknowledgments}
The authors would like to thank D. R. Heath-Brown and A. Yang for their
insightful comments and advice throughout the preparation of this article. The first author is grateful to Max Planck Institute for Mathematics in Bonn for its hospitality and financial support. The authors thank ChatGPT for assistance in editing and proof reading the manuscript.

\end{document}